\theoremstyle{plain}
\newtheorem{thm}{Theorem}[section]
\newtheorem{lem}[thm]{Lemma}
\newtheorem{prop}[thm]{Proposition}
\theoremstyle{definition}
\newtheorem{rem}[thm]{Remark}
\newtheorem{ex}[thm]{Example}
\mathchardef\semic="303A
\newcommand{\R}{{\mathbf R}}
\newcommand{\Z}{{\mathbf Z}}
\newcommand{\mD}{{\mathcal D}}
\DeclareMathOperator{\re}{Re}
\newcommand{\im}{\text{{\rm Im}}\,}
\newcommand{\sett}[2]{ \{ #1 \, \semic \, #2 \} }
\newcommand{\bigsett}[2]{ \Big\{ #1 \semic #2 \Big\} }
\newcommand{\supp}{\text{{\rm supp}}\,}
\newcommand{\dist}{\text{{\rm dist}}\,}
\newcommand{\barint}{\mbox{$ave \int$}}
\newcommand{\esssup}{\text{{\rm ess sup}}}
\newcommand{\pv}{\text{p.v.}}
\def\barint_#1{\mathchoice
            {\mathop{\vrule width 6pt
height 3 pt depth -2.5pt
                    \kern -8.8pt
\intop}\nolimits_{#1}}%
            {\mathop{\vrule width 5pt height
3 pt depth -2.6pt
                    \kern -6.5pt
\intop}\nolimits_{#1}}%
            {\mathop{\vrule width 5pt height
3 pt depth -2.6pt
                    \kern -6pt
\intop}\nolimits_{#1}}%
            {\mathop{\vrule width 5pt height
3 pt depth -2.6pt
          \kern -6pt \intop}\nolimits_{#1}}}
\definecolor{gr}{rgb}   {0.,   0.8,   0. }
\definecolor{bl}{rgb}   {0.,   0.5,   1. }
\definecolor{mg}{rgb}   {0.7,  0.,    0.7}
\newcommand{\Bk}{\color{black}}
\newcommand{\Rd}{} 
\begin{document}

\title[Carleson-sparse domination]
{Sharp weighted non-tangential maximal estimates via Carleson-sparse domination}
\author[Andreas Ros\'en]{Andreas Ros\'en$\,^1$}
\thanks{$^1\,$Formerly Andreas Axelsson. Supported by the 
Swedish Research Council (Grant 2022-03996).}
\address{Andreas Ros\'en\\Mathematical Sciences, Chalmers University of Technology and University of Gothenburg\\
SE-412 96 G{\"o}teborg, Sweden}
\email{andreas.rosen@chalmers.se}

\keywords{Sparse domination,
non-tangential maximal functional,
Carleson functional,
sharp weighted estimates
}
\subjclass[2010]{42B35, 42B20, 42B37}

\begin{abstract}
We prove sharp weighted estimates for the non-tangential maximal function
of singular integrals mapping functions from $\R^n$ to the half-space in $\R^{1+n}$
above $\R^n$. The proof is based on pointwise sparse domination of the adjoint
singular integrals that map functions from the half-space back to the boundary.
It is proved that these map $L_1$ functions
in the half-space to weak $L_1$ functions on the boundary.
From this a non-standard sparse domination of the singular integrals is established, where averages have been replaced by Carleson averages.
\end{abstract}

\maketitle

\section{Introduction}

Let us recall the sparse domination paradigm for estimating singular integral operators, which has been very successful in proving sharp weighted estimates for various singular operators for more than a decade.
Given a singular integral operator $T$ on $\R^n$, with Calder\'on--Zygmund kernel 
$k(x,y)$, the procedure for obtaining estimates is as follows.
\begin{enumerate}[label=(\alph*)]
\item Boundedness of $T$ on $L_2(\R^n)$ is proved. For classical 
convolution singular integral operators the Fourier transform is used, 
and for nonconvolution singular integral operators
$Tb$ theorems are used.
\item Using the Calder\'on--Zygmund decomposition, $L_2$ boundedness 
and estimates of $k(x,y)$, weak $L_1(\R^n)$ boundedness of $T$ is proved.
\item Using weak $L_1(\R^n)$ boundedness of $T$ and its grand 
maximal truncation operator $M_T$, a pointwise domination
$$
  |Tf(x)|\lesssim \sum_{Q\in \mD_f, Q\ni x} \barint_{\!\!\!3Q} |f(y)| dy,
  \qquad x\in \R^n,
$$
by a sum of averages of $f$, over a sparse collection $\mD_f$ of dyadic cubes 
$Q$, is proved.
Sparse roughly means that the cubes in $\mD_f$ are essentially disjoint.
See Section~\ref{sec:carlsparse} for the definition. 
\item
From the sparse domination, one can prove the boundedness of $T$ on any
Banach function space on which, together with its dual space, the maximal function is bounded. For example on weighted $L_p(w)$-spaces, $1<p<\infty$, 
$w\in A_p(\R^n)$. 
\end{enumerate}
See for example Lerner~\cite{Lerner:16} and
Lerner and Nazarov~\cite{LernerNaz:19}. \Rd
We use the analyst's inequality $X\lesssim Y$, which means that
$X\le CY$ for some constant $C<\infty$ independent of relevant variables
but possibly depending on some parameters which should be clear from 
the context. For example, the $C$ in (c) above is independent of $f$ and $x$, but may depend on the sparseness parameter $\eta$.
$X\gtrsim Y$ means $Y\lesssim X$, and $X\eqsim Y$ means $X\lesssim Y$
and $X\gtrsim Y$. \Bk

Somewhat hand in hand with singular integrals $T$ as above, goes the 
theory of singular operator families $\{\Theta_t\}_{t>0}$, where
$\Theta_t$ are integral operators
\begin{equation}  \label{eq:thetat}
   \Theta_tf(x)= \int_{\R^n} k(t,x;y) f(y) dy, \qquad x\in\R^n, t>0.
\end{equation}
Equivalently, the operator family defines a mapping from functions on $\R^n$ to functions on 
the upper half-space $\R^{1+n}_+=\sett{(t,x)}{t>0,x\in\R^n}$,
and a basic $L_2$ estimate is the square function estimate
\begin{equation}    \label{eq:sfe}
   \int_0^\infty \|\Theta_tf\|_2^2 \, \frac{dt}t \lesssim \|f\|_2^2.
\end{equation}
For families of classical convolution operators $\Theta_t$, the 
square function estimate \eqref{eq:sfe} is Calder\'on's reproducing formula and 
the function $(t,x)\mapsto \Theta_tf(x)$ is a continuous wavelet transform 
of $f$. See Daubechies~\cite{Daubechies:92}.
For families of nonconvolution operators $\Theta_t$, there are $Tb$ theorems
for proving \eqref{eq:sfe}. 
See for example Semmes~\cite{Semmes:90}
and Hofmann and Grau De La Herr\'{a}n~\cite{HofmannGrau:17}. 
Sparse domination has been extended to this framework 
to prove sharp weighted estimates of the {\em square function}  
$$
  x\mapsto \left(  \int_0^\infty |\Theta_tf(x)|^2 \frac{dt}{t}\right)^{1/2}.
$$
See Lerner~\cite{Lerner:11} and Bailey, Brocchi and Reguera~\cite{BBReguera:23}. 

In the above works, the kernel of $\Theta_t$ is at least integrable on $\R^n$
and typically satisfies an estimate
\begin{equation}   \label{eq:kernelest}
  |k(t,x;y)|\lesssim t^{-n}\frac 1{(1+ |x-y|/t)^{n+\delta}}, \qquad x,y\in\R^n, t>0, 
\end{equation}
for some $\delta>0$, together with suitable off-diagonal H\"older
estimates.
In the present work, we consider operators $\Theta_t$ with kernels that are only 
locally integrable and typically satisfy \eqref{eq:kernelest} only for $\delta=0$.
The simplest example are the (non-singular) Riesz transforms
$$
  R_t^j f(x)= \int_{\R^n}  \frac{x_j-y_j}{(t^2+|x-y|^2)^{(1+n)/2}} f(y) dy,
   \qquad x\in\R^n, t>0,
$$
where $(t,x)\mapsto R_t^j f(x)$, $j=1,\ldots, n$, are Stein--Weiss harmonic conjugate functions to the Poisson extension 
$$
  P_tf(x)= R_t^0f(x)= \int_{\R^n}  \frac{t}{(t^2+|x-y|^2)^{(1+n)/2}} f(y) dy,\qquad x\in\R^n, t>0,  
$$
of $f$.

If $\delta=0$, the square function estimate \eqref{eq:sfe} is no longer  
suitable as an $L_2$ estimate to feed into a sparse domination scheme. This is because, 
in contrast to the case where the kernels are integrable,
we generally do not have strong convergence $\Theta_t f\to 0$ 
for $\delta=0$ when $t\to 0$,
so the left hand side in \eqref{eq:sfe} is typically infinite.
Instead, a natural object to estimate when $\delta=0$ is the 
non-tangential maximal function of $\Theta_t f(x)$.
We therefore replace \eqref{eq:sfe} with an estimate
\begin{equation}   \label{eq:nte}
   \int_{\R^n}\Big( \sup_{(t,x): |x-z|<\alpha t} |\Theta_t f(x)| \Big)^2 dz
   \lesssim \|f\|_2^2,
\end{equation}
which serves as our starting $L_2$ estimate for step (a) in a sparse domination scheme.
Given an $L_2$ non-tangential estimate \eqref{eq:nte}, we proceed by duality
and consider the map
\begin{equation}   \label{eq:synthesis}
  Sf(y)= \iint_{\R^{1+n}_+} k(t,x;y) f(t,x) dtdx,\qquad y\in\R^n,
\end{equation}
which maps \Rd functions $f(t,x)$ defined \Bk in the upper half-space $\R^{1+n}_+$
to functions $(Sf)(y)$ on its boundary $\R^n$.
By duality, \eqref{eq:nte} corresponds to the Carleson estimate
\begin{equation}   \label{eq:L2Carleson}
   \|Sf\|_2^2\lesssim \int_{\R^n}  \left( \sup_{Q\ni z}\frac 1{|Q|}
   \iint_{\widehat Q} |f(t,x)| dtdx  \right)^2 dz
\end{equation}
for $S$, where the $\sup$ is over all cubes $Q\subset \R^n$ containing $z$,
and $\widehat Q\subset\R^{1+n}_+$ is the Carleson box, the cube with $Q$ as its base. \Rd
The equivalence of \eqref{eq:L2Carleson} and \eqref{eq:nte} follows from
\cite[Thm. 3.2]{HytRos:13}. \Bk
Our main result, Theorem~\ref{thm:sparsedom}, shows that \eqref{eq:L2Carleson} implies the pointwise Carleson-sparse domination
\begin{equation}   \label{eq:csparse}
  |Sf(y)|\lesssim \sum_{Q\in \mD_f, Q\ni y} \frac 1{|Q|}\iint_{\widehat{3Q}} |f(t,x)| dtdx, \qquad\text{a.e. } y\in\R^n,
\end{equation}
of $S$.
Here $3Q$ denotes the cube with the same center as $Q$ but three times the
side length.
Note that the Carleson averages in this sparse domination normalize by
the $\R^n$ measure $|Q|$, and not by the $\R^{1+n}$ measure $|\widehat{3Q}|\eqsim |\widehat Q|$.
This is in contrast to usual sparse domination techniques.

The four sections of this paper simply implement steps (a)-(d) of
the sparse domination scheme described above.
Starting from an $L_2$ non-tangential maximal estimate \eqref{eq:nte}, or equivalently an $L_2$ Carleson estimate \eqref{eq:L2Carleson}, we prove
weighted $L_p$ estimates. For Muckenhoupt weights $w\in A_p(\R^n)$, 
we estimate the $L_p(\R^n,w)$ operator norms by $[w]_{A_p}^{\max(p,q)/p}$, $1/p+1/q=1$. 
See Theorem~\ref{thm:Sweightest} for the Carleson estimate and 
Theorem~\ref{thm:ntsharp} for the dual non-tangential maximal estimate.
The power of $[w]_{A_p}$ is optimal because it is known to be 
optimal for the 
$\R^n$ singular integral $\Theta_0=\lim_{t\to 0} \Theta_t$, \Rd
see Hyt\"onen~\cite{Hyt:19}, \Bk which the non-tangential
maximal function of $(\Theta_t)_{t>0}$ majorizes. 
Note that for $\delta>0$ the non-tangential maximal function of $\Theta_t f$ is \Rd pointwise bounded \Bk by the maximal function of $f$. In this case the
sharper weighted estimate by $[w]_{A_p}^{q/p}$ follows directly
from Buckley~\cite[Thm. 2.5]{Buckley:93}.

The sparse domination and estimates considered in this paper are 
related to those by Hyt\"onen and Ros\'en~\cite{HytRos:23}.
There, $L_p$ non-tangential maximal estimates for 
causal $\R^{1+n}_+$ Calder\'on--Zygmund operators
$$
   T^+ f(t,x)=\pv\iint_{\R^{1+n}_+} K(t,x;s,y) g(s,y) dsdy, \qquad (s,t)\in \R^{1+n}_+
$$
were proved.
Here causal means the kernel condition $K(t,x;s,y)=0$ for $s>t$,
that is, $T^+$ is upward mapping.
Both those results and the results in the present paper are proved by
a sparse domination of the adjoint anti-causal, or downward mapping,
operator. A natural attempt to prove an estimate of
\eqref{eq:synthesis}, for example in $L_p(\R^n)$-norm, 
is to apply the trace estimate from 
\cite[Thm. 1.1]{HytRos:18} to obtain
$$
  \|Sf\|_{L_p(\R^n)}\lesssim \|C(\nabla \widetilde Sf )\|_{L_p(\R^n)},
$$
where $C$ denotes the Carleson functional, which is defined within
the square in
\eqref{eq:L2Carleson} and $\widetilde S f$ denotes an extension of $Sf$
to $\R^{1+n}_+$, defined by an auxiliary weakly singular integral operator
$\widetilde S$ on  $\R^{1+n}_+$.
The idea would then be to apply \cite[Thm. 5.1]{HytRos:23} to the singular 
integral $T= \nabla \widetilde S$ on  $\R^{1+n}_+$.
However this seems to be impossible. Technically, the boundedness of $T$ would
require a Whitney averaging in the Carleson norm.
More seriously, according to \cite[Ex. 2.1]{HytRos:23} it is necessary 
that $T=T^-$
is anti-causal in order for boundedness to be possible in the Carleson norm.
To achieve anti-causality, a natural way is to truncate the kernel
of $\widetilde S$ so that it becomes anti-causal. 
(This is possible without changing the boundary values $Sf$.) 
But at $t=s$, this adds an $\R^n$ singular integral operator to 
$T= \nabla \widetilde S$, which acts on $f(t,x)$ in the $x$ variable for each $t>0$.
According to \cite[Ex. 2.3]{HytRos:23}, such a horizontal mapping singular integral
will in general not be bounded in a Carleson norm.
To summarize, the estimates considered in this paper are 
related to, but not implied by, those in \cite{HytRos:23}.

\section{Setup and $L_2$ estimates}

This section is about step (a) in the sparse domination scheme.
First some notation. 
We fix a system of dyadic cubes $\mD=\bigcup_{j\in\Z}\mD^j$ in $\R^n$,
where $\mD^j$ are the cubes of side length $\ell(Q)= 2^{-j}$, such that
the dyadic cubes in $\mD$ form a connected tree under inclusion.
Given $Q\in\mD$, its parent is the minimal dyadic cube strictly containing
$Q$, its grandparent is the parent of the parent, its children are the 
maximal dyadic cubes stricly contained in $Q$ and its siblings are
the other children of the parent of $Q$. 
For a given cube $Q\subset\R^n$, dyadic or not, 
we denote by $cQ$, $c>0$, the 
cube with the same center as $Q$ but with side length 
$\ell(cQ)=c\ell(Q)$.
The Carleson box above $Q$ is the $\R^{1+n}_+$ cube
$\widehat Q= (0,\ell(Q))\times Q$, and the Whitney region described
by $Q$ is the upper half
$Q^w= (\ell(Q)/2,\ell(Q))\times Q$
of $\widehat Q$.

With $1_E$ and $|E|$ we denote the indicator function and the measure of a set $E$. The maximal function of a function $f(x)$ on $\R^n$ is
$$
  Mf(x)= \sup_{r>0} \,\barint_{\!\!\! |y-x|<r} |f(y)| dy, \qquad x\in\R^n.
$$
We denote the non-tangential maximal functional
of a function $f(t,x)$ on $\R^{1+n}_+$ by
\begin{equation}    \label{defn:Nfknal}
    Nf(z)= \esssup_{(t,x): |x-z|<\alpha t} |f(t,x)|,\qquad z\in\R^n,
\end{equation}
where the aperture $\alpha>0$ of the cones is a fixed constant. 
We denote the Carleson functional of
a function $f(t,x)$ on $\R^{1+n}_+$ by
  $$
    Cf(z)= \sup_{Q\ni z}\frac 1{|Q|}
   \iint_{\widehat Q} |f(t,x)| dtdx,\qquad z\in\R^n,
$$
where the $\sup$ is over all (non-dyadic) cubes $Q\subset \R^n$ containing $z$.
Versions of $M$, $N$ and $C$, dyadic and non-dyadic, will appear
below where they are needed.

Throughout this paper, we fix a kernel function $k:\R^{1+2n}\to\R$
which morally has the size estimates
$$
  |k(t,x;y)| \lesssim \frac 1{|(t,x)-(0,y)|^{n}}.
$$
However, we will not use this estimate, but only the 
following three.
We assume, for some $\delta>0$, off-diagonal H\"older estimates
\begin{equation}   \label{eq:xreg}
  |k(t+s,x+z;y)-k(t,x;y)|  \lesssim \frac{|(s,z)|^\delta}{|(t,x)-(0,y)|^{n+\delta}}
\end{equation}   
for $|(s,z)|\le  |(t,x)-(0,y)|/2$, and 
\begin{equation}  \label{eq:yreg}
   |k(t,x;y+z)-k(t,x;y)|   \lesssim \frac{|z|^{\delta}}{|(t,x)-(0,y)|^{n+\delta}}
\end{equation}
for $|z|\le  |(t,x)-(0,y)|/2$, where $s,t>0$ and $x,y,z\in\R^n$.

Using the kernel $k(t,x;y)$ we define integral operators $\Theta_t$ as in \eqref{eq:thetat} and an operator $S$ as in 
\eqref{eq:synthesis}, and we assume that $L_2$ Carleson estimates
\begin{equation}   \label{eq:L2Cest}
   \| S f\|_{L_2(\R^n)}\lesssim \|Cf\|_{L_2(\R^n)}
\end{equation}
hold, \Rd that is, \eqref{eq:L2Carleson}. \Bk
By duality, \eqref{eq:L2Cest} is equivalent to $L_2$ non-tangential maximal
 estimates
\begin{equation}   \label{eq:L2Nest}
   \| N(S^* f)\|_{L_2(\R^n)}\lesssim \|f\|_{L_2(\R^n)},
\end{equation} \Rd
that is, \eqref{eq:nte}, \Bk
for the adjoint operator
$$
  S^*f(t,x)=\int_{\R^n} k(t,x;y) f(y) dy,\qquad (t,x)\in\R^{1+n}_+,
$$ \Rd
that is, \eqref{eq:thetat}. \Bk
A standard method for verifying the non-tangential estimate \eqref{eq:L2Nest}
is to derive it from a gradient square function estimate
$$
  \int_0^\infty \|t\nabla \Theta_t f\|_2^2\, \frac{dt}{t} \lesssim \|f\|_2^2,
$$
which in turn can be proved by a $Tb$ theorem applied to the kernel 
$t\nabla_{t,x}k(t,x;y)$.
Indeed, $(t,x)\mapsto \Theta_tf(x)$ often
solves an elliptic equation in applications, where $L_2$ estimates between non-tangential
maximal functionals and square functionals are known.
See \cite[Sec. 10.1]{AxAuscher:09} for a large class of elliptic equations.

For the Riesz transforms mentioned in the introduction, there is the following alternative 
algebraic argument, also for Lipschitz domains.

\begin{ex}
To simplify the notation, we only consider dimension $1+n=2$.
The following argument can be performed in higher dimensions, replacing
the complex algebra with the Clifford algebra.
See \cite[Sec. 8.3]{RosenGMA:19} for the higher dimensional Cauchy
integral that would be used for this argument.

Let $t=\phi(x)$ be the graph of a Lipschitz function $\phi:\R\to\R$, 
and consider the family of operators
$$
  \Theta_t f(x)= \frac 1{2\pi i} \int_\R \frac{f(y) \, (1+i\phi'(y)) dy}
  {y+i\phi(y)-x-i(t+\phi(x))}, \qquad x\in\R, t>0,
$$
which represent the Cauchy integral that acts on functions on the
graph.
If $\phi=0$ and $f$ is real-valued, then $\re\Theta_t f$ is the
Poisson integral of $f$ and $\im \Theta_t f$ is the Riesz transform 
of $f$, but for general $\phi$, both real and imaginary parts have only
kernels with the decay $\delta=0$ in \eqref{eq:kernelest}.
 
We now realise that by the Cauchy integral formula, we have the
algebraic identity
$$
   \Theta_t f= \Theta_t(\Theta_0 f),
$$
since $g=\Theta_0f=\lim_{t\to 0} \Theta_t f$ represents the trace
on the graph of the analytic function $\Theta_t f$ above this graph.
The function $g$ belongs to the upper Hardy subspace, and therefore
its Cauchy extension $\Theta_{-t} g=0$, $t>0$, vanishes below the graph.
Thus
$$
  \Theta_t f= (\Theta_t-\Theta_{-t})(\Theta_0 f), \qquad t>0,
$$ 
where one proves that the kernel of $\Theta_t-\Theta_{-t}$ has
Poisson kernel estimates
$\lesssim t/(t^2+(x-y)^2)$.
From this follows the pointwise maximal estimate
$$
  N(S^* f)(z)\lesssim M(\Theta_0 f)(z), \qquad z\in \R. 
$$
Since $M: L_p(\R^n)\to L_p(\R^n)$ is bounded for $1<p\le \infty$
and the singular Cauchy integral 
$\Theta_0: L_p(\R^n)\to L_p(\R^n)$ is bounded for $1<p< \infty$,
\eqref{eq:L2Nest} follows.
However, the sharp weighted estimates in Theorem~\ref{thm:ntsharp}
below do not follow.
\end{ex}

\section{Weak $L_1$ estimates}

This section is about  step (b) in the sparse domination scheme.

\begin{prop}   \label{prop:cz}
   Assume that the operator $S$ has $L_2$ Carleson estimates
   \eqref{eq:L2Cest} and that the kernel $k$ has $x$-regularity
   \eqref{eq:xreg}. Then $S$ has weak $L_1$-estimates
   $$
     |\sett{y\in\R^n}{|Sf(y)|>\lambda}|\lesssim \lambda^{-1} \|f\|_{L_1(\R^{1+n}_+)}, \qquad \lambda>0.
   $$
\end{prop}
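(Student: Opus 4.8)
The plan is to run the classical Calderón–Zygmund argument for weak $L_1$ bounds, but with Carleson boxes replacing cubes, Carleson averages replacing ordinary averages, and the hypothesis \eqref{eq:L2Cest} playing the role of $L_2$ boundedness. Fix $\lambda>0$ and let $\{Q_j\}\subset\mD$ be the maximal dyadic cubes with $\frac1{|Q_j|}\iint_{\widehat{Q_j}}|f|\,dtdx>\lambda$. These exist since $\frac1{|Q|}\iint_{\widehat Q}|f|\le\|f\|_{L_1(\R^{1+n}_+)}/|Q|\to0$ as $\ell(Q)\to\infty$, they are pairwise disjoint, and because $\widehat{Q_j}$ is contained in the Carleson box of the parent of $Q_j$, which fails the stopping test, one gets the two-sided bound $\lambda|Q_j|<\iint_{\widehat{Q_j}}|f|\le2^n\lambda|Q_j|$, which will be essential. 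Decompose $f=g+b$ with $b=\sum_j b_j$, where $b_j=(f-\langle f\rangle_{\widehat{Q_j}})1_{\widehat{Q_j}}$ and $\langle f\rangle_{\widehat{Q_j}}=\frac1{|\widehat{Q_j}|}\iint_{\widehat{Q_j}}f$, so that $\iint_{\widehat{Q_j}}b_j=0$, $\iint|b_j|\le2\iint_{\widehat{Q_j}}|f|$, and $g$ equals $f$ off $\Omega:=\bigcup_j\widehat{Q_j}$ and the constant $\langle f\rangle_{\widehat{Q_j}}$ on each $\widehat{Q_j}$; in particular $\|g\|_{L_1(\R^{1+n}_+)}\le\|f\|_{L_1(\R^{1+n}_+)}$.

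The decisive property of $g$ is that $Cg\lesssim\lambda$ pointwise. For a dyadic cube $Q$ not contained in any $Q_j$ the stopping test gives $\iint_{\widehat Q}|f1_{\R^{1+n}_+\setminus\Omega}|\le\iint_{\widehat Q}|f|\le\lambda|Q|$, while for $Q$ contained in some $Q_j$ the function $f1_{\R^{1+n}_+\setminus\Omega}$ vanishes on $\widehat Q$; and since $|\langle f\rangle_{\widehat{Q_j}}|\le2^n\lambda/\ell(Q_j)$ on $\widehat{Q_j}$, the correction terms contribute at most $2^n\lambda\sum_j|\widehat Q\cap\widehat{Q_j}|/\ell(Q_j)\lesssim\lambda|Q|$, splitting the sum into the (disjoint) cubes $Q_j\subseteq Q$, for which $\sum|Q_j|\le|Q|$, and the at most one $Q_j$ with $Q\subseteq Q_j$. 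Hence $\frac1{|Q|}\iint_{\widehat Q}|g|\lesssim\lambda$ for all dyadic $Q$, and passing to arbitrary cubes is routine since any cube meets boundedly many dyadic cubes of comparable size. Note that it is precisely the parent bound $\iint_{\widehat{Q_j}}|f|\lesssim\lambda|Q_j|$ that prevents the mean-zero correction from destroying this Carleson estimate.

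For the good part I would use the following substitute for $L_2\to L_2$ boundedness: for any nonnegative $h$ on $\R^{1+n}_+$,
$$
  \|Ch\|_{L_2(\R^n)}^2=\int_0^\infty 2s\,|\{Ch>s\}|\,ds\lesssim\|Ch\|_{L_\infty(\R^n)}\,\|h\|_{L_1(\R^{1+n}_+)},
$$
because $C$ maps $L_1(\R^{1+n}_+)$ to $L_{1,\infty}(\R^n)$ (a Vitali covering argument, using that Carleson boxes over disjoint cubes are disjoint), so $s|\{Ch>s\}|\lesssim\|h\|_{L_1(\R^{1+n}_+)}$, while the integrand vanishes for $s>\|Ch\|_{L_\infty}$. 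Applying this to $h=g$ and invoking \eqref{eq:L2Cest} gives
$$
  \|Sg\|_{L_2(\R^n)}^2\lesssim\|Cg\|_{L_2(\R^n)}^2\lesssim\|Cg\|_{L_\infty(\R^n)}\,\|g\|_{L_1(\R^{1+n}_+)}\lesssim\lambda\,\|f\|_{L_1(\R^{1+n}_+)},
$$
so Chebyshev yields $|\{|Sg|>\lambda/2\}|\lesssim\lambda^{-1}\|f\|_{L_1(\R^{1+n}_+)}$. This step is the non-standard heart of the proof: one cannot simply bound $\|Cg\|_{L_2}^2\le\|Cg\|_{L_\infty}\|Cg\|_{L_1}$, since $\|Cg\|_{L_1(\R^n)}$ is generically infinite (the Carleson functional of a small blob near the boundary has a non-integrable tail), so trading $\|Cg\|_{L_1(\R^n)}$ for $\|g\|_{L_1(\R^{1+n}_+)}$ via the layer-cake identity above is what makes the scheme close.

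For the bad part, set $E=\bigcup_j 3Q_j$ (with $3$ replaced by a larger dimensional constant if needed); then $|E|\le3^n\sum_j|Q_j|<3^n\lambda^{-1}\sum_j\iint_{\widehat{Q_j}}|f|\le3^n\lambda^{-1}\|f\|_{L_1(\R^{1+n}_+)}$, using disjointness of the $\widehat{Q_j}$. For $y\notin3Q_j$ and $(t,x)\in\widehat{Q_j}$ the distance from $(t,x)$ to the center $(t_j,x_j)$ of $\widehat{Q_j}$ is $\lesssim\ell(Q_j)\le\tfrac12|(t,x)-(0,y)|$, so subtracting $k(t_j,x_j;y)$ (permitted since $\iint_{\widehat{Q_j}}b_j=0$) and using the $x$-regularity \eqref{eq:xreg},
$$
  |Sb_j(y)|\lesssim\iint_{\widehat{Q_j}}\frac{\ell(Q_j)^\delta}{|x-y|^{n+\delta}}\,|b_j(t,x)|\,dtdx,
$$
whence $\int_{\R^n\setminus3Q_j}|Sb_j|\lesssim\iint_{\widehat{Q_j}}|b_j|\lesssim\iint_{\widehat{Q_j}}|f|$ by the standard tail integral $\int_{|y-x_j|\gtrsim\ell(Q_j)}\ell(Q_j)^\delta|x-y|^{-n-\delta}\,dy\lesssim1$. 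Summing over $j$ gives $\int_{\R^n\setminus E}|Sb|\lesssim\|f\|_{L_1(\R^{1+n}_+)}$, hence $|\{y\notin E:|Sb(y)|>\lambda/2\}|\lesssim\lambda^{-1}\|f\|_{L_1(\R^{1+n}_+)}$, and the proposition follows from $|\{|Sf|>\lambda\}|\le|\{|Sg|>\lambda/2\}|+|E|+|\{y\notin E:|Sb(y)|>\lambda/2\}|$. (Here $Sf$ is defined a.e.\ by a standard density argument, which I suppress.) The main obstacle is the good part, for the reason explained above; the bad part is the routine Calderón–Zygmund tail estimate, and the only subtlety there is the bookkeeping with Carleson boxes.
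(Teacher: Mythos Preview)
Your proof is correct and the Calder\'on--Zygmund decomposition and the treatment of the bad part coincide with the paper's. The genuine difference is in the good part. The paper does not argue directly with the Carleson functional: it passes to the area functional $A$, uses the equivalence $\|Cg\|_{L_2}\approx\|Ag\|_{L_2}$ from \cite{HytRos:18}, and then proves a separate lemma (a good-$\lambda$ inequality of Coifman--Meyer--Stein type) showing that $\|Cg\|_{L_\infty}\le\lambda$ implies $\|Ag\|_{L_2}^2\lesssim\lambda\|Ag\|_{L_1}$, after which $\|Ag\|_{L_1}\approx\|g\|_{L_1(\R^{1+n}_+)}$ by Fubini closes the estimate. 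Your layer-cake argument, feeding the weak-$(1,1)$ bound for $C$ into $\|Cg\|_{L_2}^2=\int_0^{\|Cg\|_{L_\infty}}2s\,|\{Cg>s\}|\,ds\lesssim\|Cg\|_{L_\infty}\|g\|_{L_1}$, reaches the same conclusion $\|Cg\|_{L_2}^2\lesssim\lambda\|f\|_{L_1}$ without ever introducing $A$ or proving a good-$\lambda$ inequality; it is more elementary and self-contained. The paper's route, on the other hand, isolates a lemma about the interplay between $C$ and $A$ that may be of independent interest.
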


For the proof, we need a twin of the Carleson functional, the area functional
$$
    Af(z)= \iint_{|x-z|<\alpha t} |f(t,x)| \,\frac{dtdx}{t^n},\qquad z\in\R^n,
$$
where the aperture $\alpha>0$ of the cones is a fixed constant. 
Recall that for $1\le p<\infty$,
different apertures $\alpha>0$ give equivalent norms $\|Af\|_{L_p(\R^n)}$.
See \cite[Prop. 2.2]{HytRos:18}.

\begin{lem}   \label{lem:goodlam}
  Let $g:\R^{1+n}_+\to \R$ be a function such that $\|Cg\|_{L_\infty(\R^n)}\le\lambda$. Then 
$$
   \int_{\R^n} |Ag(z)|^2 dz\lesssim \lambda \int_{\R^n} |Ag(z)| dz.
$$
\end{lem}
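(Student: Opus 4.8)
The plan is to estimate $\int_{\R^n}|Ag(z)|^2\,dz$ by writing out the square, integrating in $z$ first, and recognising what remains as a Carleson average. We may assume $g\ge 0$, and also that $\iint_{\R^{1+n}_+}g<\infty$, since otherwise $\int_{\R^n}Ag\,dz=\infty$ by Tonelli and there is nothing to prove. Starting from $Ag(z)=\iint_{\R^{1+n}_+}1_{\{|x-z|<\alpha t\}}\,g(t,x)\,t^{-n}\,dt\,dx$, squaring and applying Tonelli gives
\[
  \int_{\R^n}|Ag(z)|^2\,dz
  =\iint_{\R^{1+n}_+}\iint_{\R^{1+n}_+}
  \Big(\int_{\R^n}1_{\{|x-z|<\alpha t\}}\,1_{\{|y-z|<\alpha s\}}\,dz\Big)
  \frac{g(t,x)\,g(s,y)}{(ts)^n}\,dt\,dx\,ds\,dy .
\]
The inner integral is $|B(x,\alpha t)\cap B(y,\alpha s)|$, which is $\le c_n\alpha^n\min(t,s)^n$ and vanishes unless $|x-y|<\alpha(t+s)$. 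By the symmetry $(t,x)\leftrightarrow(s,y)$ it suffices to bound the part of the double integral over $\{t\le s\}$; there the bracket divided by $(ts)^n$ is $\lesssim s^{-n}\,1_{\{|x-y|<2\alpha s\}}$.

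Next I would freeze $(s,y)$ and observe that the region $\{(t,x): 0<t\le s,\ |x-y|<2\alpha s\}$ is contained in the Carleson box $\widehat Q$, where $Q$ is the cube centred at $y$ with side length $\ell(Q)=\max(2,4\alpha)\,s=:C_\alpha s$ (the factor is chosen so that $\ell(Q)>s$ and $B(y,2\alpha s)\subset Q$). Since $y\in Q$, the definition of $Cg$ yields
\[
  \iint_{\{0<t\le s,\ |x-y|<2\alpha s\}}g(t,x)\,dt\,dx
  \le \iint_{\widehat Q}g(t,x)\,dt\,dx
  \le |Q|\,Cg(y)
  \le C_\alpha^{\,n}\,s^n\,\lambda .
\]
Substituting this back, the $\{t\le s\}$ part of the double integral is
\[
  \lesssim \lambda\iint_{\R^{1+n}_+}g(s,y)\,s^{-n}\,s^{n}\,ds\,dy
  =\lambda\iint_{\R^{1+n}_+}g(s,y)\,ds\,dy ,
\]
and the $\{s\le t\}$ part obeys the same bound by symmetry, so $\int_{\R^n}|Ag(z)|^2\,dz\lesssim\lambda\iint_{\R^{1+n}_+}g$.

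Finally I would close the loop with the elementary identity $\int_{\R^n}Ag(z)\,dz=|B(0,\alpha)|\iint_{\R^{1+n}_+}g(t,x)\,dt\,dx$ (again Tonelli, using $\int_{\R^n}1_{\{|x-z|<\alpha t\}}\,dz=|B(0,\alpha t)|=|B(0,\alpha)|t^n$), which converts the previous line into $\int_{\R^n}|Ag(z)|^2\,dz\lesssim\lambda\int_{\R^n}|Ag(z)|\,dz$, as claimed. I do not expect a genuine obstacle here; the only points demanding care are the elementary geometry of the overlap $|B(x,\alpha t)\cap B(y,\alpha s)|$ and the bookkeeping of the aperture constant when enclosing the truncated cone region inside a Carleson box (in particular taking $\ell(Q)$ a fixed multiple of $s$ strictly larger than the height $s$, so that the containment is genuine).
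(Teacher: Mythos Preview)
Your argument is correct and complete: expanding the square, integrating in $z$ first, bounding the overlap $|B(x,\alpha t)\cap B(y,\alpha s)|$ by $c_n\alpha^n\min(t,s)^n\,1_{\{|x-y|<\alpha(t+s)\}}$, and then dominating the inner $(t,x)$-integral by a Carleson box average gives exactly $\int|Ag|^2\lesssim\lambda\iint g$, which the Tonelli identity $\int Ag\,dz=|B(0,\alpha)|\iint g$ converts into the claim. Nothing is missing.

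The paper proceeds quite differently. It proves a good-lambda inequality
\[
  |\{A^{(\alpha)}g>2s\}|\lesssim(\lambda/s)\,|\{A^{(\beta)}g>s\}|,\qquad\alpha<\beta,
\]
via a Whitney decomposition of $\{A^{(\beta)}g>s\}$ and the containment of a narrow cone inside a wide cone plus a truncated cone of bounded height; integrating in $s$ and using the equivalence of $\|A^{(\alpha)}g\|_p$ for different apertures then gives the lemma. Your route is more elementary---no aperture change, no Whitney covers, no appeal to $L_p$-equivalence of area functionals---and exploits directly the identity $\int Ag= c_{n,\alpha}\iint g$. The paper's good-lambda argument, on the other hand, yields a distributional inequality that carries more information than the single $L_2$--$L_1$ bound (for instance it would give $\|Ag\|_p^p\lesssim\lambda\|Ag\|_{p-1}^{p-1}$ for any $p>1$), and its structure is the one that generalises when the inner operator is not so explicitly amenable to Tonelli.
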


Recall that 
\begin{equation}   \label{eq:ACequiv}
  \| Af\|_{L_p(\R^n)}\eqsim \| Cf\|_{L_p(\R^n)}
\end{equation}
for $1< p<\infty$. 
See \cite[Prop. 2.4]{HytRos:18}.
However, for $p=\infty$ the area functional can be unbounded, even if the Carleson functional is bounded.
We take the opportunity to correct a typo in the counterexample given
in the first display after \cite[Prop. 2.4]{HytRos:18}: It should read 
$f(t,x)= (t+|x|)^{-1}$ in each dimension $n$.
Lemma~\ref{lem:goodlam} means that $Ag$ is close enough to be bounded
for the stated estimate to hold.

\begin{proof}
The following is inspired by the good lambda
inequality from \cite[Thm. 3(a)]{CoifMeyerStein:85}.
Denote by $A^{(\alpha)}g$ and $A^{(\beta)}g$ the area 
functionals with the apertures $\alpha$ and $\beta$ respectively.
Assuming that $\|Cg\|_{L_\infty(\R^n)}\le\lambda$, we claim that
the estimate
$$
  |\sett{z}{A^{(\alpha)}g(z)>2s}|\lesssim (\lambda/s)
   |\sett{z}{A^{(\beta)}g(z)>s}|
$$
holds for all $s>0$, provided that $\alpha< \beta$.
To see this, let $U=\bigcup_j Q_j$ be a Whitney decomposition 
of the open set $U= \sett{z}{A^{(\beta)}g(z)>s}$ into disjoint
cubes $Q_j\subset \R^n$ such that $\ell(Q_j)\eqsim \dist(Q_j,\R^n\setminus  U)$.
It suffices to show 
\begin{equation}   \label{eq:goodlambda}
  |\sett{z\in Q_j}{A^{(\alpha)}g(z)>2s}|\lesssim (\lambda/s) |Q_j|
\end{equation}
for each $Q_j$, since summing over $j$ then gives the desired estimate.
Let $M_j= \sett{z\in Q_j}{A^{(\alpha)}g(z)>2s}$ and choose a point
$z_j\in \R^n\setminus U$ such that $\dist(z_j, Q_j)\eqsim \ell(Q_j)$,
so that $A^{(\beta)}g(z_j)\le s$. 
Now note that 
\begin{multline*}
  \sett{(t,x)}{|x-z|<\alpha t}\\ \subset 
  \sett{(t,x)}{|x-z_j|<\beta t}\cup \sett{(t,x)}{|x-z|<\alpha t, t<c\ell(Q_j)}
\end{multline*}
for all $z\in Q_j$, for some constant $c>1$ only depending on $\alpha,\beta$.
Indeed, if $|x-z|<\alpha t$ and $t\ge c\ell(Q_j)$, then 
$|x-z_j|\le \alpha t+|z-z_j|$ where $|z-z_j|\eqsim \ell(Q_j)\le t/c$.
This gives the estimate
\begin{multline*}
  |M_j|\le \frac 1{2s}\int_{M_j} A^{(\alpha)}g(z) dz \\
  \le \frac 1{2s}\int_{M_j} A^{(\beta)}g(z_j) dz
  + \frac 1{2s}\int_{Q_j} \left( \iint_{|x-z|<\alpha t, t<c\ell(Q_j)} |g(t,x)|t^{-n} dtdx \right) dz \\
  \le |M_j|/2+ c' s^{-1} \iint_{(1+2c\alpha)Q_j\times (0,c\ell(Q_j))} |g(t,x)| dtdx,
\end{multline*}
for some $c'<\infty$.
Since $\|Cg\|_{L_\infty(\R^n)}<\lambda$, this yields 
$|M_j|\lesssim s^{-1}|Q_j|\lambda$  as claimed.

Finally, multiplying \eqref{eq:goodlambda} by $s$ and integrating over 
$s\in(0,\infty)$ gives
\begin{multline*}
   \int_{\R^n} |A^{(\alpha)}g(z)|^2 dz\eqsim
   \int_0^\infty s |\sett{z}{A^{(\alpha)}g(z)>2s}| ds \\
   \lesssim \int_0^\infty \lambda |\sett{z}{A^{(\beta)}g(z)>s}| ds
   = \lambda \int_{\R^n} |A^{(\beta)}g(z)| dz.
\end{multline*}
Since different apertures give equivalent $L_2$ norms and equivalent $L_1$ norms,
this proves the lemma.
\end{proof}

\begin{proof}[Proof of Proposition~\ref{prop:cz}]
(i)
We estimate $f$ using a Calder\'on--Zygmund argument based on 
Carleson averages.
Given $\lambda>0$, let $Q_j\in \mD$ denote the maximal dyadic cubes for which
$$
  \iint_{\widehat{Q_j}} |f(t,x)| dtdx>\lambda |Q_j|.
$$  
Define 
$$
  b_j(t,x)= 
  \begin{cases}
    f(t,x)-\frac 1{|\widehat{Q_j}|}\iint_{\widehat{Q_j}} f(s,y) dsdy, &
    (t,x)\in \widehat{Q_j}, \\
    0, & \text{else},
  \end{cases}
$$
and $g= f-\sum_j b_j$.
Note that in the definition of $b_j$, we do normalize the integral by
$|\widehat{Q_j}|$ and not $|Q_j|$.

(ii)
Estimating first $Sg$, we note that $\|Cg\|_{L_\infty(\R^n)}\lesssim \lambda$.
Indeed, since $g$ is constant equal to the $\widehat{Q_j}$-average of $f$ on each $\widehat{Q_j}$, it follows that 
$|Q|^{-1}\iint_{\widehat Q}|g| dtdx\lesssim |Q_j|^{-1}\iint_{\widehat{Q_j}}|f| dtdx$ if $Q\subset Q_j$.
Note that $\iint_{\widehat{Q_j}}|f| dtdx\le \iint_{\widehat{R}}|f| dtdx\le 
\lambda|R|\eqsim \lambda |Q_j|$, where $R$ is the dyadic parent of $Q_j$.
For $Q$ not contained in any $Q_j$,
we have $|Q|^{-1}\iint_{\widehat Q}|f| dtdx\le \lambda$.

We can now apply \eqref{eq:L2Cest}, \Rd \eqref{eq:ACequiv} \Bk and Lemma~\ref{lem:goodlam} to get
\begin{multline*}
  |\sett{y\in\R^n}{|Sg(y)|>\lambda}|
  \le \lambda^{-2}\int_{\R^n} |Sg(y)|^2 dy \\
  \lesssim \lambda^{-2}\int_{\R^n} |Cg(y)|^2 dy
  \eqsim \lambda^{-2}\int_{\R^n} |Ag(y)|^2 dy \\
  \lesssim \lambda^{-1}\int_{\R^n} |Ag(y)| dy
  \eqsim \lambda^{-1} \|g\|_{L_1(\R^{1+n}_+)}
  \le \lambda^{-1} \|f\|_{L_1(\R^{1+n}_+)}.
\end{multline*}

(iii)
To estimate $Sb_j$, we do the standard estimate 
$$
  |\sett{y}{|S(\sum_j b_j)|>\lambda}|\le \sum_j |3Q_j|
  + \lambda^{-1} \sum_j\int_{\R^n\setminus (3Q_j)} |Sb_j(y)| dy,
$$
where 
$$
  \sum_j |3Q_j|\lesssim \sum_j \lambda^{-1} \iint_{\widehat{Q_j}}|f(t,x)|dtdx\le \lambda^{-1} \|f\|_{L_1(\R^{1+n}_+)}.
$$
For the second term, we use that $\iint_{\widehat{Q_j}} b_j(t,x) dtdx=0$
and \eqref{eq:xreg} to estimate
\begin{multline*}
\int_{\R^n\setminus (3Q_j)} |Sb_j(y)| dy \\=
  \int_{\R^n\setminus (3Q_j)} \left| \iint_{\widehat{Q_j}}
  (k(t,x;y)-k(t_Q,x_Q; y))b_j(t,x) dtdx \right| dy \\
  \lesssim 
   \iint_{\widehat{Q_j}}
   \left( \int_{\R^n\setminus (3Q_j)} \frac{\ell(Q)^\delta}{|(t_Q,x_Q)-(0,y)|^{n+\delta}} dy \right) |b_j(t,x)| dtdx \\
  \lesssim\iint_{\widehat{Q_j}} |b_j(t,x)| dtdx 
   \lesssim\iint_{\widehat{Q_j}} |f(t,x)| dtdx,
\end{multline*}
where $(t_Q,x_Q)$ denotes the center of $\widehat{Q_j}$.
This yields \Rd
$$
  \sum_j\int_{\R^n\setminus (3Q_j)} |Sb_j(y)| dy
  \lesssim \sum_j \iint_{\widehat{Q_j}} |f(t,x)| dtdx \lesssim 
  \|f\|_{L_1(\R^{1+n}_+)},
$$ \Bk
which completes the proof.
\end{proof}

\begin{rem}
In the proof of Proposition~\ref{prop:cz}, we can easily
reduce to the case when
$f$ is constant on each dyadic Whitney region $Q^w$.
Indeed, given $f\in L_1(\R^{1+n}_+)$, we write $f=f_0+f_1$, 
where $\iint_{Q^w} f_0(t,x)dtdx=0$ and $f_1$ is constant on each dyadic
Whitney region $Q^w$. 
For $f_0$, using \eqref{eq:xreg}, we have strong $L_1$ estimates
\begin{multline*}
  \int_{\R^n} |Sf_0(y)| dy= 
  \int_{\R^n} \left| \sum_{Q\in \mD} \iint_{Q^w} 
  (k(t,x;y)-k(t_Q,x_Q; y))f_0(t,x) dtdx\right| dy \\
  \lesssim 
   \sum_{Q\in \mD} \iint_{Q^w}
   \left( \int_{\R^n} \frac{\ell(Q)^\delta}{|(t_Q,x_Q)-(0,y)|^{n+\delta}} dy \right) |f_0(t,x)| dtdx \\
   \lesssim \sum_{Q\in \mD} \iint_{Q^w} |f_0(t,x)| dtdx = \|f_0\|_{L_1(\R^{1+n}_+)},
\end{multline*}
where $(t_Q,x_Q)$ now denotes the center of $Q^w$.
Thus it remains to estimate $Sf_1$.
\end{rem}

For the sparse domination of $S$, we require the maximal operator
$$
  M_S f(y)= \sup_{Q\ni y} \|S(1_{\R^{1+n}_+\setminus\widehat{3Q}}f)\|_{L_\infty(Q)}, \qquad y\in\R^n,
$$
where the sup is taken over all dyadic cubes $Q\subset\R^n$ containing $y$.
This is a version of Lerner's grand maximal truncation operator from \cite{Lerner:16}, which is enough for our purposes.

\begin{prop}   \label{prop:lernermax}
   Assume that the operator $S$ has $L_2$ Carleson estimates
   \eqref{eq:L2Cest} and that the kernel $k$ has the $x$-regularity
   \eqref{eq:xreg} and the $y$-regularity
   \eqref{eq:yreg}. 
   Then $M_S$ has the weak $L_1$ bound
   $$
     |\sett{y\in\R^n}{|M_Sf(y)|>\lambda}|\lesssim \lambda^{-1} \|f\|_{L_1(\R^{1+n}_+)}, \qquad \lambda>0.
   $$
\end{prop}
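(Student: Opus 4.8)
The plan is to follow the standard Calder\'on--Zygmund route, but with all averages replaced by their Carleson analogues, exactly as in the proof of Proposition~\ref{prop:cz}, and to use that proof as a black box for the ``good'' part. Fix $\lambda>0$ and perform the same stopping-time decomposition: let $\{Q_j\}\subset\mD$ be the maximal dyadic cubes with $\iint_{\widehat{Q_j}}|f|\,dtdx>\lambda|Q_j|$, and split $f=g+\sum_j b_j$ with $b_j$ supported in $\widehat{Q_j}$ and having $\widehat{Q_j}$-mean zero, and $g$ constant equal to the $\widehat{Q_j}$-average of $f$ on each $\widehat{Q_j}$. As established in the proof of Proposition~\ref{prop:cz}, one has $\|Cg\|_{L_\infty}\lesssim\lambda$ and $\sum_j|3Q_j|\lesssim\lambda^{-1}\|f\|_{L_1(\R^{1+n}_+)}$, so it remains to bound $M_Sg$ on a large set and to control $M_S b_j$ off $3Q_j$.

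For the good part I would argue as follows. For a fixed dyadic cube $Q\ni y$ and a point $z\in Q$, we must estimate $|S(1_{\R^{1+n}_+\setminus\widehat{3Q}}g)(z)|$. Split the complement of $\widehat{3Q}$ into the ``far'' region and the ``tube over the annulus'' region as is standard; the far contribution is handled by the $y$-regularity \eqref{eq:yreg}, which lets one replace $z$ by the center $z_Q$ of $Q$ at the cost of a convergent sum, reducing matters to $|Sg(z_Q)|$-type quantities plus a tail $\lesssim Cg$-controlled term; the near tube contributes at most a constant multiple of a Carleson average of $g$, hence $\lesssim\lambda$. Thus pointwise $M_Sg(y)\lesssim |Sg|^{**}(y)+\lambda$ in an appropriate sense, and since $S$ has the weak $L_1$ Carleson bound from Proposition~\ref{prop:cz} together with the $L_\infty$ Carleson bound on $g$, the usual interpolation/Kolmogorov argument (or directly the computation in step (ii) of the proof of Proposition~\ref{prop:cz}) gives $|\{y:M_Sg(y)>C\lambda\}|\lesssim\lambda^{-1}\|g\|_{L_1}\le\lambda^{-1}\|f\|_{L_1(\R^{1+n}_+)}$.

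For the bad part I would show that $M_S(\sum_j b_j)(y)\lesssim\lambda$ for $y\notin\bigcup_j 3Q_j$, after which $|\{y:M_S(\sum_j b_j)>C\lambda\}|\le\sum_j|3Q_j|\lesssim\lambda^{-1}\|f\|_{L_1(\R^{1+n}_+)}$ finishes the proof. Fix $y\notin\bigcup 3Q_j$ and a dyadic cube $Q\ni y$; we must bound $\|S(1_{\R^{1+n}_+\setminus\widehat{3Q}}\sum_j b_j)\|_{L_\infty(Q)}$. For each $j$ there are two cases. If $\widehat{Q_j}\subset\widehat{3Q}$ (roughly, $Q_j$ small and close to $Q$) then $b_j$ is annihilated by the truncation and contributes nothing. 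Otherwise $Q_j$ is comparable to or larger than $Q$, or far from $Q$; using the mean-zero property of $b_j$ over $\widehat{Q_j}$ and the $x$-regularity \eqref{eq:xreg}, exactly as in step (iii) of the proof of Proposition~\ref{prop:cz}, one gets $\int_{\R^n\setminus 3Q_j}|Sb_j(w)|\,dw\lesssim\iint_{\widehat{Q_j}}|f|\,dtdx\lesssim\lambda|Q_j|$; combined with the geometric separation of $Q$ from $Q_j$ (so that $Q\subset\R^n\setminus 3Q_j$) and a pointwise-to-$L^\infty$ comparison on $Q$ using again \eqref{eq:yreg} to move within $Q$, one sums the resulting geometric series in the relative scales and positions of the $Q_j$ to obtain the bound $\lesssim\lambda$ uniformly in $Q$.

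The main obstacle I expect is the bad-part estimate: unlike the plain weak-$L_1$ bound for $S$, here one needs the estimate to hold for the \emph{truncated} operator uniformly over all cubes $Q\ni y$, so one must carefully organize the cubes $Q_j$ according to whether $\widehat{Q_j}$ meets $\widehat{3Q}$ and sum the kernel tails over this two-parameter family (relative position and relative scale). The care needed is that the Carleson normalization is by $|Q_j|$ rather than $|\widehat{Q_j}|$, so the $\ell(Q_j)^\delta$ gain from \eqref{eq:xreg} must be paired against the correct power of $|(t_{Q_j},x_{Q_j})-(0,w)|$; this is the same bookkeeping as in step (iii) of Proposition~\ref{prop:cz}, and the disjointness of the $Q_j$ keeps the sum convergent, but verifying uniformity in $Q$ is where the real work lies.
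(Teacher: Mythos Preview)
Your approach is genuinely different from the paper's, and it has a real gap in the bad part.

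The paper does \emph{not} run a second Calder\'on--Zygmund decomposition for $M_S$. Instead it proves a Cotlar-type pointwise inequality directly for $f$: for $y_1\in Q$ one writes
\[
  S(1_{\R^{1+n}_+\setminus\widehat{3Q}}f)(y_1)
  = \big(Sf_2(y_1)-Sf_2(y)\big)+Sf(y)-S(1_{\widehat{3Q}}f)(y),
\]
averages in $y\in Q$, and controls the three pieces respectively by $Cf(y_0)$ (via \eqref{eq:yreg}), by $M(|Sf|^{1/p})^p$, and by $Cf$ again (via Kolmogorov and Proposition~\ref{prop:cz}). This yields the pointwise bound $M_Sf\lesssim Cf+M(|Sf|^{1/p})^p$, from which weak $L_1$ follows immediately using Proposition~\ref{prop:cz}, the $L_{p,\infty}$ boundedness of $M$, and the weak $L_1$ bound for $C$.

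Your bad-part step is where the argument breaks. You claim that for $y\notin\bigcup_j 3Q_j$ one has $M_S(\sum_j b_j)(y)\lesssim\lambda$ pointwise. Even ignoring the truncation issue (when $\widehat{Q_j}$ meets $\partial\widehat{3Q}$ the mean-zero of $b_j$ is destroyed, which you do not address), the untruncated pointwise sum already fails. Using mean-zero and \eqref{eq:xreg} gives
\[
  |Sb_j(y)|\lesssim \frac{\ell(Q_j)^\delta}{\dist(y,Q_j)^{n+\delta}}\iint_{\widehat{Q_j}}|b_j|
  \lesssim \lambda\,\frac{\ell(Q_j)^{n+\delta}}{\dist(y,Q_j)^{n+\delta}},
\]
but $\sum_j \ell(Q_j)^{n+\delta}/\dist(y,Q_j)^{n+\delta}$ need not be bounded: disjointness of the $Q_j$ and the constraint $\dist(y,Q_j)\gtrsim\ell(Q_j)$ only force each dyadic distance-annulus to contribute $O(1)$, with no decay across scales. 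For a concrete obstruction in $\R^1$, take $y=0$ and $Q_k=[3\cdot 2^{-k},4\cdot 2^{-k})$; these are disjoint dyadic intervals with $0\notin 3Q_k$, yet each term contributes a fixed positive constant and the sum diverges. This is exactly why, even in the classical $\R^n$ theory, weak $L_1$ for the maximal truncation is obtained via Cotlar's inequality rather than by a direct Calder\'on--Zygmund argument on the bad part.

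Your good-part sketch is in fact drifting toward a Cotlar argument (``replace $z$ by the center $z_Q$'', ``reduce to $|Sg(z_Q)|$-type quantities''), but applied to $g$ rather than $f$; once you are doing that, there is no saving in having decomposed $f$ first. The clean route is the one in the paper: prove the Cotlar bound for $f$ itself and invoke Proposition~\ref{prop:cz} only through $M(|Sf|^{1/p})^p$.
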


\begin{proof}
This result follows by tweaking the standard proof of Cotlar's lemma.
Fix a cube $Q\subset \R^n$.
Let
$$
  f_1= 1_{\widehat{3Q}}f \quad\text{and}\quad
  f_2= 1_{\R^{1+n}_+\setminus\widehat{3Q}}f.
$$ 
At a given point $y_1\in Q$, we estimate $Sf_2(y_1)$ by averaging over
a variable point $y\in Q$. Write
$$
  Sf_2(y_1)= (Sf_2(y_1)-Sf_2(y)) + Sf(y)-Sf_1(y)=
   I+II+III.
$$
Fixing $p>1$, we estimate term II as
$$
  \barint_{\!\!\! Q}|Sf(y)|^{1/p} dy\lesssim \inf_Q M(|Sf|^{1/p})
$$
and term III as
$$
   \barint_{\!\!\! Q}|Sf_1(y)|^{1/p} dy\lesssim |Q|^{-1/p}\|f_1\|_{L_1(\R^{1+n}_+)}^{1/p}\lesssim (\inf_Q Cf)^{1/p},
$$
using Kolmogorov's inequality and Proposition~\ref{prop:cz}.
To estimate term I, we use \eqref{eq:yreg} to get
$$
  |Sf_2(y_1)-Sf_2(y)|\lesssim \iint_{\R^{1+n}_+\setminus\widehat{3Q}}
  \frac{\ell(Q)^\delta}{|(t,x)-(0,y_0)|^{n+\delta}} |f(t,x)| dtdx,
$$
for any $y_0\in Q$.
Writing $r=|(t,x)-(0,y_0)|$ and 
$r^{-n-\delta}\eqsim \int_r^\infty s^{-n-1-\delta} ds$, we get
\begin{multline*}
  |Sf_2(y_1)-Sf_2(y)|\lesssim \ell(Q)^\delta\iint_{\R^{1+n}_+\setminus\widehat{3Q}}
  \left(\int_r^\infty \frac{ds}{s^{n+1+\delta}}\right) |f(t,x)| dtdx\\
  = \ell(Q)^\delta\int_{\ell(Q)}^\infty\frac{ds}{s^{n+1+\delta}}
  \left( \iint_{\{|(t,x-y_0)|<s\}\setminus\widehat{3Q}} |f(t,x)| dtdx\right) \\
  \lesssim \ell(Q)^\delta\int_{\ell(Q)}^\infty\frac{ds}{s^{1+\delta}} Cf(y_0)
  \eqsim Cf(y_0).
\end{multline*}
Collecting the estimates, we have prove the pointwise estimate
$$
  M_S f\lesssim Cf+ M(|Sf|^{1/p})^p + Cf.
$$
Here $M$ is bounded on $L_{p,\infty}(\R^n)$, which combined with
Proposition~\ref{prop:cz} gives the estimate for the second term.
A standard Vitali covering argument finally shows that 
$$
     |\sett{y\in\R^n}{|Cf(y)|>\lambda}|\lesssim \lambda^{-1} \|f\|_{L_1(\R^{1+n}_+)}, \qquad \lambda>0,
$$
which completes the proof.
\end{proof}

\section{Carleson-sparse domination}  \label{sec:carlsparse}

This section is about  step (c) in the sparse domination scheme.

We recall that a collection of dyadic cubes $\widetilde\mD\subset\mD$
is called $\eta$-sparse, $\eta>0$, if each $Q\in \widetilde \mD$ contains
a subset $E_Q\subset Q$ such that $|E_Q|\ge \eta |Q|$ and 
$E_R\cap E_Q=\emptyset$ whever $R\ne Q$, $Q,R\in\widetilde\mD$.

Our main result in this paper is the following Carleson-sparse domination.
The proof given below is an adaption of the estimate in \cite{Lerner:16}
to singular integrals mapping from $\R^{1+n}_+$ to $\R^n$.

\begin{thm}   \label{thm:sparsedom}
   Assume that the operator $S$ has $L_2$ Carleson estimates
   \eqref{eq:L2Cest} and that the kernel $k$ has the $x$-regularity
   \eqref{eq:xreg} and the $y$-regularity
   \eqref{eq:yreg}. 
   Fix $0<\eta<1$.
   Then for any $f\in L_1(\R^{1+n}_+)$ with bounded support, there exists 
   an $\eta$-sparse family $\mD_f$ of dyadic cubes such that
$$
  |Sf(y)|\lesssim \sum_{Q\in \mD_f, Q\ni y} \frac 1{|Q|}\iint_{\widehat{3Q}} |f(t,x)| dtdx, \qquad\text{for a.e. } y\in\R^n.
$$
\end{thm}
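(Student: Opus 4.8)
The plan is to follow the by-now standard recursive/stopping-time construction that produces a sparse family, as in Lerner's work \cite{Lerner:16}, but adapted so that the local estimates are phrased in terms of the Carleson functional $Cf$ (normalized by $|Q|$) rather than ordinary averages of $f$. First I would reduce to a single dyadic cube $Q_0$ large enough that $\supp f\subset \widehat{Q_0}$, and argue locally on $Q_0$; the full statement then follows by summing over a partition of $\R^n$ into such maximal cubes, each handled identically, after noting that $|Sf(y)|$ for $y$ outside all the relevant cubes is controlled by the tail estimate already inside $M_S$. On $Q_0$ I would run the following selection: let $C_0$ be a large constant (to be fixed from the weak-type constants), and let $\{P_j\}$ be the maximal dyadic subcubes of $Q_0$ such that either
\[
  \frac 1{|P_j|}\iint_{\widehat{3P_j}} |f|\,dtdx > C_0\,\frac 1{|Q_0|}\iint_{\widehat{3Q_0}} |f|\,dtdx
  \quad\text{or}\quad
  \inf_{P_j} M_S f > C_0\,\frac 1{|Q_0|}\iint_{\widehat{3Q_0}} |f|\,dtdx .
\]
The weak $L_1$ bounds from Proposition~\ref{prop:cz} (for $S$, via the Carleson functional) and Proposition~\ref{prop:lernermax} (for $M_S$), together with the fact that the Carleson functional itself has a weak $(1,1)$ bound normalized by $|Q|$, give $\sum_j |P_j|\le \tfrac12|Q_0|$ once $C_0$ is chosen large enough, so that $E_{Q_0}:=Q_0\setminus\bigcup_j P_j$ has measure $\ge\tfrac12|Q_0|\ge\eta|Q_0|$. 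One then iterates the construction inside each $P_j$, obtaining the sparse family $\mD_f=\{Q_0\}\cup\bigcup_j(\text{family for }P_j)$; the $E_Q$'s are pairwise disjoint by maximality at each generation.

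The core pointwise estimate is the following: for a.e. $y\in E_{Q_0}$ (equivalently, for a.e. $y\in Q_0$ lying in no $P_j$),
\[
  |Sf(y)| \lesssim \frac 1{|Q_0|}\iint_{\widehat{3Q_0}} |f|\,dtdx \;+\; \sup_{j: P_j\ni y}\,|S(1_{\widehat{Q_0}\setminus\widehat{3P_j}} f)(y)| ,
\]
which I would prove by splitting $f=1_{\widehat{3P_j}}f + 1_{\widehat{Q_0}\setminus\widehat{3P_j}}f$ at the minimal selected cube $P_j$ containing $y$ (if no such $P_j$ exists, $y\in E_{Q_0}$ and one uses instead the Lebesgue differentiation theorem against the average $\tfrac1{|Q_0|}\iint_{\widehat{3Q_0}}|f|$ together with $M_Sf(y)\le C_0\,\tfrac1{|Q_0|}\iint_{\widehat{3Q_0}}|f|$, which is exactly the non-selection condition). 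The first piece is controlled by $\tfrac1{|P_j|}\iint_{\widehat{3P_j}}|f|\le C_0\,\tfrac1{|Q_0|}\iint_{\widehat{3Q_0}}|f|$ again by non-selection of the parent of $P_j$, after passing from the value $Sf_1(y)$ to its average over the parent and invoking Lebesgue differentiation; the tail piece $S(1_{\widehat{Q_0}\setminus\widehat{3P_j}}f)(y)$ is by definition bounded by $M_Sf$ evaluated at the parent of $P_j$, hence again by $C_0\,\tfrac1{|Q_0|}\iint_{\widehat{3Q_0}}|f|$. Summing the resulting bounds along the chain of nested selected cubes containing a fixed $y$ yields exactly $|Sf(y)|\lesssim\sum_{Q\in\mD_f,\,Q\ni y}\tfrac1{|Q|}\iint_{\widehat{3Q}}|f|$.

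The main obstacle, and the place where care beyond the classical argument is needed, is the non-standard normalization: the "averages" appearing everywhere are $\tfrac1{|Q|}\iint_{\widehat{3Q}}|f|$, which are \emph{not} averages over $\widehat{3Q}$ (those would divide by $|\widehat{3Q}|\approx|Q|\ell(Q)$) and do \emph{not} a priori converge pointwise to anything as $Q$ shrinks. In particular the Lebesgue-differentiation step used in the classical proof to pass from $|Sf_1(y)|$ to its average is not literally available for the Carleson-normalized quantity. I would handle this exactly as in the weak $L_1$ arguments already carried out: by the Remark after Proposition~\ref{prop:cz} one may first reduce to $f$ constant on each dyadic Whitney region $Q^w$ (the complementary part $f_0$ being handled by a direct strong $L_1$ estimate that contributes a single sparse cube $Q_0$), and for such $f_1$ the Carleson functional $Cf_1(y)$ and the truncated quantities behave like genuine dyadic averages of the sequence $(\iint_{Q^w}|f_1|/|Q|)_Q$, for which the dyadic martingale convergence/Lebesgue differentiation theorem does apply. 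The remaining routine points — equivalence of $\tfrac1{|Q|}\iint_{\widehat{3Q}}$ with the dyadic Carleson functional up to the constants in $3Q$, the weak $(1,1)$ bound for the Carleson functional, and the bookkeeping that the constants in the three weak-type inequalities can be absorbed into one choice of $C_0$ — are standard and I would only indicate them.
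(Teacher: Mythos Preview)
Your overall strategy (recursive selection of subcubes using the weak $L_1$ bounds for $S$ and $M_S$) is the right one, but your stopping rule differs from the paper's and this creates a genuine gap. The paper follows Lerner~\cite{Lerner:16} literally: it sets
\[
  E=\bigl\{y\in\R^n:\max(|Sf(y)|,M_Sf(y))>c|Q|^{-1}\textstyle\iint_{\widehat{3Q}}|f|\bigr\}
\]
and selects the $R_j$ as the maximal subcubes with $|R_j\cap E|>\alpha|R_j|$ (a \emph{density} stopping), with $\alpha=1/(2^n+1)$ so that each $R_j$ meets both $E$ and its complement. Then on $Q\setminus\bigcup R_j\subset Q\setminus E$ one has $|Sf|\le$ threshold \emph{by definition of $E$}, and on each $R_j$ a witness $y'\in R_j\setminus E$ gives $\|S(1_{\R^{1+n}_+\setminus\widehat{3R_j}}f)\|_{L_\infty(R_j)}\le M_Sf(y')\le$ threshold directly. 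No Lebesgue differentiation enters; the recursive inequality $1_Q|S(1_{\widehat{3Q}}f)|\le C_1|Q|^{-1}\iint_{\widehat{3Q}}|f|+\sum_j 1_{R_j}|S(1_{\widehat{3R_j}}f)|$ is immediate.

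Your Calder\'on--Zygmund stopping on the Carleson average and on $\inf_{P_j}M_Sf$ does not deliver this for free. To bound $|Sf(y)|$ on $E_{Q_0}$ you must show $S(1_{\widehat{3R}}f)(y)\to 0$ as dyadic $R\ni y$ shrink (equivalently $|Sf|\le M_Sf$ a.e.), and your proposed fix via the Whitney reduction in the Remark after Proposition~\ref{prop:cz} does not establish this: that remark yields a strong $L_1$ bound for the oscillatory part $f_0$, not any pointwise convergence for $S$ acting on $f_1$. (The convergence does hold, by approximating $f$ in $L_1$ by functions supported in $\{t>\epsilon\}$, for which $1_{\widehat{3R}}f=0$ once $\ell(R)$ is small, and using that $|Sf|+M_Sf$ is weak $L_1$; but you neither state nor prove this.) There is a second slip: maximality of $P_j$ only provides a good point in the \emph{parent} $P_j^p$, hence a bound on $S(1_{\R^{1+n}_+\setminus\widehat{3P_j^p}}f)$ over $P_j^p$, not on $S(1_{\R^{1+n}_+\setminus\widehat{3P_j}}f)$ over $P_j$ as your recursion requires. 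Finally, your displayed ``core estimate'' is incoherent as written, since for $y\in E_{Q_0}$ the supremum over $P_j\ni y$ is empty. The paper's density-stopping device sidesteps all three issues at once.
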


\begin{proof}
(i) \Rd
Let $c, \alpha>0$ be constants to be chosen below. \Bk
Let $Q\in \mD$ be such that $\supp f\subset \widehat{3Q}$.
Define
\begin{equation}  \label{eq:Edef}
  E= \bigsett{y\in \R^n}{\max\big(|Sf(y)|, M_S f(y)\big) >c |Q|^{-1}\iint_{\widehat{3Q}} |f|dtdx},
\end{equation}
and let $R_j\in\mD$ be the maximal subcubes $R_j\subset Q$ such that
$$
  |R_j\cap E|>\alpha |R_j|.
$$
Using maximality, we can also obtain a converse estimate by noting that
$$
  |R_j\cap E|\le |R_j^p\cap E|\le \alpha |R_j^p|= \alpha 2^n |R_j|,
$$ 
where $R^p_j$ denotes the dyadic parent of $R_j$.
We choose $\alpha= 1/(2^n+1)$, so that each $R_j$ contains 
a substantial part of both $E$ and $\R^n\setminus E$ in the sense
that 
\begin{equation}  \label{eq:evendiv}
    \min(|R_j\cap E|, |R_j\setminus E|)\ge |R_j|/(2^n+1).
\end{equation}

Lebesgue's differentiation theorem shows that
\begin{equation}   \label{eq:containE}
  E\cap Q\subset \bigcup_j R_j,
\end{equation}
modulo a set of measure zero.
From Propositions~\ref{prop:cz} and \ref{prop:lernermax}, we obtain
$$
  |E|\lesssim \left(c |Q|^{-1}\iint_{\widehat{3Q}} |f|dtdx\right)^{-1}
  \iint_{\widehat{3Q}} |f|dtdx = |Q|/c.
$$
Since $\bigcup R_j=\{M(1_E)>\alpha\}$, by
the weak $L_1$ boundedness of $M$ we have
$$
\sum_j|R_j|\lesssim \alpha^{-1} |E|\lesssim |Q|/(\alpha c).
$$
Having fixed $\alpha=1/(2^n+1)$, we choose $c$ large enough so that
\begin{equation}   \label{eq:smallbads}
  \sum_j|R_j|\le (1-\eta)|Q|.
\end{equation}

(ii) \Rd
Recall that $\supp f\subset \widehat{3Q}$, so that $1_{\widehat{3Q}}f=f$. \Bk
Now write 
\begin{multline*}
  1_Q S(1_{\widehat{3Q}}f)=
  1_{Q\setminus\bigcup R_j} S(1_{\widehat{3Q}}f) 
  +\sum_j 1_{R_j} S(1_{\R^{1+n}_+\setminus\widehat{3R_j}}f)
  +\sum_j 1_{R_j} S(1_{\widehat{3R_j}}f) \\
  =I+II +\sum_j 1_{R_j} S(1_{\widehat{3R_j}}f).
\end{multline*}
\begin{itemize}
\item
For term I, by \eqref{eq:containE} we have $y\notin E$ for a.e.
$y\in Q\setminus\bigcup R_j$, and therefore \eqref{eq:Edef} gives
$$
  |S(1_{\widehat{3Q}}f)(y)|\lesssim |Q|^{-1}\iint_{\widehat{3Q}} |f|dtdx.
$$
\item
For subcube $R_j$ in term II, by \eqref{eq:evendiv} there exists 
$y'\in R_j\setminus E$. For all $y\in R_j$, we therefore have
$$
|S(1_{\R^{1+n}_+\setminus\widehat{3R_j}})f(y)|
\lesssim M_Sf(y')\lesssim |Q|^{-1}\iint_{\widehat{3Q}} |f|dtdx,
$$ 
using \eqref{eq:Edef}.
\end{itemize}
To summarize, we have shown that 
\begin{equation}   \label{eq:esttoit}
  1_Q S(1_{\widehat{3Q}}f)\le 
  C_1|Q|^{-1}\iint_{\widehat{3Q}} |f|dtdx+
\sum_j 1_{R_j} S(1_{\widehat{3R_j}}f),
\end{equation}
for some constant $C_1<\infty$,
where the disjoint subcubes satisfy \eqref{eq:smallbads}.

(iii)
We can now iterate \eqref{eq:esttoit} to get the stated sparse estimate
as follows.
Choose $Q_1\in\mD$ such that $\supp f\subset \widehat{Q_1}$.
Set $P_1=Q_1$ and recursively define $P_{j+1}$ to be the dyadic 
parent of $P_j$, $j=1,2\ldots$.
Let $Q_2, Q_3, \ldots$ be an ordering of all the siblings of all the 
cubes $P_j$, $j=1,2,\ldots$.
We obtain a disjoint union $\R^n=\bigcup_{j=1}^\infty Q_j$ modulo
a zero set.

Since $Q_1$ is contained in a sibling of $Q_j$, it follows that 
$3Q_j\supset Q_1$, and therefore 
$\widehat{3Q_j}\supset \supp f$, $j=1,2,\ldots$.
Hence
\begin{equation}   \label{eq:topsparse}
  Sf= \sum_j 1_{Q_j} Sf= \sum_j 1_{Q_j} S(1_{\widehat{3Q_j}}f). 
\end{equation}
We now apply the estimate in step (ii) above to each $Q=Q_j$,
which produces first generations of subcubes $R_k\subset Q_j$.
Then we apply the estimate in step (ii) above to each such first
generation subcube $Q=R_k$,
which produces second generations of subcubes.
Continuing recursively in this way, we define the family of dyadic  
cubes $\mD_f$ as the union of all $Q_j$ along with all generations of subcubes $R_k$.
For $Q\in \mD_f$, we define 
$$
   E_Q= Q\setminus\bigcup_k R_k,
$$
where $R_k$ are all the subcubes of $Q$, constucted from $Q$ as in 
step (i) above. It follows from \eqref{eq:smallbads} that $\mD_f$
is $\eta$-sparse.
Combining 
\eqref{eq:topsparse} and recursively \eqref{eq:esttoit}, the stated 
sparse domination of $Sf$ follows.
This completes the proof.
\end{proof}

\section{Sharp weighted estimates}

This section is about  step (d) in the sparse domination scheme.
We first derive weighted $L_p$ Carleson estimates of $S$ from Theorem~\ref{thm:sparsedom}, 
and then use duality between 
the Carleson and non-tangential maximal functionals to obtain
weighted $L_q$ non-tangential maximal estimates of $S^*$.

We fix $1<p<\infty$ and $1/p+1/q=1$, a let $w(x)>0$, $x\in \R^n$, be
an $A_p$ weight, that is,
$$
  [w]_{A_p}^{1/p}= \sup_Q \left(\barint_{\!\!\! Q} wdx \right)^{1/p}\left( \barint_{\!\!\! Q} \nu dx \right)^{1/q}<\infty,
$$ 
where the sup is over all cubes $Q$, and 
$\nu= w^{-q/p}$ is the dual weight.
It is readily checked that $\nu$ is an $A_q$ weight with
$[\nu]_{A_q}^{1/q}= [w]_{A_p}^{1/p}$.
Write $w(Q)= \int_Q dw$ for the $w$-measure of $Q$, where $dw=wdx$,
and similarly for $\nu$. 

Some additional technicalities arise since we aim to prove weighted
estimates which are sharp with the respect to the dependence on
the $A_p$ characteristic $[w]_{A_p}$ of the weight. In particular we need to avoid using 
the doubling property of the measures quantitatively.
To this end, dyadic operators are preferred.
We also need to handle dilations $3Q$ of dyadic cubes $Q\in\mD$
coming from Theorem~\ref{thm:sparsedom}.

\begin{lem}   \label{lem:dyadmax}
Let $1<p\le\infty$. Then we have maximal function estimates
$$
  \|M^{3D}_w f\|_{L_p(w)}\lesssim \|f\|_{L_p(w)}
$$
for the centered dyadic weighted maximal operator
$$
  M^{3D}_wf(z)= \sup_{Q\in\mD, Q\ni z} \frac 1{w(3Q)}\int_{3Q} |f| dw.
$$
The same estimate holds for the standard dyadic weighted 
maximal operator
$$
  M^{D}_wf(z)= \sup_{Q\in\mD, Q\ni z} \frac 1{w(Q)}\int_{Q} |f| dw.
$$
The implicit constant is independent of $w$, but depends on $p$.
\end{lem}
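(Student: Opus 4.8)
The plan is to establish, for $M^{D}_w$ and $M^{3D}_w$ alike, the weak type $(1,1)$ inequality $w(\{M_wf>\lambda\})\lesssim\lambda^{-1}\|f\|_{L_1(w)}$ with a constant depending only on the dimension, and then to interpolate (Marcinkiewicz) with the trivial bound $\|M_wf\|_{L_\infty(w)}\le\|f\|_{L_\infty(w)}$. Since both endpoint constants are independent of $w$, so is the resulting $L_p(w)$ bound for $1<p<\infty$, and the case $p=\infty$ is the trivial bound itself. Throughout one may first reduce to $f\in L_1(w)$ with bounded support by monotone convergence, which makes the selections below unproblematic (alternatively one uses that $w(\R^n)=\infty$ for an $A_p$ weight).

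For $M^{D}_w$ this is the usual dyadic argument and no covering lemma is needed: given $\lambda$, let $\{Q_k\}\subset\mD$ be the maximal dyadic cubes with $w(Q_k)^{-1}\int_{Q_k}|f|\,dw>\lambda$. These are pairwise disjoint, their union is $\{M^{D}_wf>\lambda\}$, and summing $w(Q_k)<\lambda^{-1}\int_{Q_k}|f|\,dw$ over $k$ gives the weak $(1,1)$ bound with constant $1$.

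For $M^{3D}_w$ the difficulty is that the averages run over the dilates $3Q$, and one must never replace $w(3Q)$ by $w(Q)$, which would cost a power of the doubling constant of $w$. I would instead argue by a Besicovitch-type covering applied directly to the dilated cubes. Fix $\lambda$, put $\Omega=\{M^{3D}_wf>\lambda\}$, and for each $z\in\Omega$ choose a dyadic cube $Q_z\ni z$ with $w(3Q_z)^{-1}\int_{3Q_z}|f|\,dw>\lambda$. Since $z$ lies in the central subcube $Q_z$ of $3Q_z$, writing $\ell_z=\ell(Q_z)$ one has $\{y:|y-z|_\infty\le\ell_z\}\subset 3Q_z\subset\{y:|y-z|_\infty\le2\ell_z\}$, so $\{3Q_z\}_{z\in\Omega}$ is a family of cubes each of which is centred at its associated point up to eccentricity $2$. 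The Besicovitch covering theorem, in the form valid for such families, yields a countable subfamily $\{3Q_{z_k}\}_k$ that still covers $\Omega$ and has bounded overlap, $\sum_k 1_{3Q_{z_k}}\le C_n$. Then
$$
  w(\Omega)\le\sum_k w(3Q_{z_k})\le\lambda^{-1}\sum_k\int_{3Q_{z_k}}|f|\,dw
  =\lambda^{-1}\int_{\R^n}|f|\Big(\sum_k 1_{3Q_{z_k}}\Big)\,dw\le C_n\lambda^{-1}\|f\|_{L_1(w)},
$$
each selected cube being controlled by its own average, so that no doubling of $w$ is used.

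The step I expect to be the main obstacle is precisely the one just circumvented: with the naïve ``maximal cube'' selection one is forced to bound $\sum_k w(3Q_k)$ for the triples of a disjoint family of dyadic cubes, and these triples need not have bounded overlap (a disjoint family of dyadic cubes accumulating at a common boundary point has $3Q_j$'s with unbounded overlap), so the sum cannot be controlled by $\lambda^{-1}\|f\|_{L_1(w)}$ with a constant independent of the doubling constant of $w$. Performing the Besicovitch selection on the dilated cubes themselves removes this obstruction. A secondary, routine point is to justify the preliminary reduction to compactly supported $f$ and the boundedness of the side lengths $\ell_z$ needed for Besicovitch, as indicated above.
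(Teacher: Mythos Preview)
Your proof is correct. At the structural level it matches the paper exactly: both argue by the trivial $L_\infty$ bound and a weak $(1,1)$ estimate, then interpolate via Marcinkiewicz. The difference lies in how the weak $(1,1)$ bound for $M^{3D}_w$ is obtained. The paper does take the ``na\"ive'' maximal-cube route that you flag as dangerous: it selects the maximal dyadic cubes $Q_j$ with $\int_{3Q_j}|f|\,dw>\lambda\,w(3Q_j)$, writes $\{M^{3D}_wf>\lambda\}=\bigcup_jQ_j$, and then asserts
\[
\sum_j w(Q_j)\le\sum_j w(3Q_j)<\lambda^{-1}\int_{\R^n}\Big(\sum_j 1_{3Q_j}\Big)|f|\,dw\le 3^n\lambda^{-1}\|f\|_{L_1(w)},
\]
the last step implicitly using a pointwise bound $\sum_j 1_{3Q_j}\le 3^n$. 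Your worry about this is well founded: even for a family of \emph{maximal} dyadic cubes, the triples can have unbounded overlap. In dimension one, with a weight $w$ that is large on a short interval just to the left of $0$ and equal to $1$ on $[0,\infty)$, one can arrange that every $Q_k=[2^{-k-1},2^{-k})$ for $k\ge k_0$ is a maximal cube for the property, while their triples $3Q_k=[0,3\cdot 2^{-k-1})$ all contain a neighbourhood of $0$, so $\sum_k1_{3Q_k}$ blows up there. Thus the pointwise inequality the paper appears to use is false in general, and your Besicovitch selection on the dilated cubes $3Q_z$---which are centred at $z$ up to bounded eccentricity, exactly as you check---is the right way to get a subfamily with genuinely bounded overlap without invoking doubling. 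Your argument is therefore not merely an alternative but in fact closes a gap in the paper's presentation of this lemma.
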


\begin{proof}
  It suffices to consider $M^{3D}_w$ as the proof for $M^{D}_w$
  is similar but simpler.
  For $p=\infty$, clearly 
  $\|M^{3D}_w f\|_{L_\infty(w)}\le \|f\|_{L_\infty(w)}$.
  By the Marcinkiewicz interpolation theorem, it suffices to show the
  weak $L_1$ estimate
  $$
    w(\sett{z}{M^{3D}f(z)>\lambda})\lesssim \lambda^{-1} \|f\|_{L_1(w)}.
  $$
  To see this, for given $\lambda>0$ we consider the dyadic cubes
   $Q\in\mD$
  such that $\iint_{3Q} |f| dw>\lambda w(3Q)$.
  Denote by $\{Q_j\}$ the maximal such cubes.
  Then $Q_j$ are disjoint and 
  $\sett{z}{M^{3D}f(z)>\lambda}= \bigcup_j Q_j$.
  This gives 
\begin{multline*}
    w(\sett{z}{M^{3D}f(z)>\lambda})= \sum_j w(Q_j)
    \le \sum_j w(3Q_j)\\ \le \lambda^{-1} \sum_j \int_{3Q_j} |f| dw=
    \lambda^{-1} \int_{\R^n}\sum_j 1_{3Q_j} |f| dw
    \le \lambda^{-1}3^n \|f\|_{L_1(w)}.
\end{multline*}
\end{proof}

\begin{thm}  \label{thm:Sweightest}
Let $1<p<\infty$.
   Assume that the operator $S$ has estimates
   \eqref{eq:L2Cest} and that the kernel $k$ has estimates
   \eqref{eq:xreg} and \eqref{eq:yreg}.
Then we have estimates
$$
  \|Sf\|_{L_p(w)}\lesssim [w]_{A_p}^{\max(1,q/p)} \|C^D_\nu (f\nu^{-1})\|_{L_p(\nu)},
$$
where the dyadic weighted Carleson functional is
$$
  C^D_\nu f(z)= \sup_{Q\in \mD, Q\ni z} \frac 1{\nu(Q)}\iint_{\widehat Q} |f(t,x)| dt d\nu. 
$$
Here the implicit constant in the estimate is independent of $w$ and $\nu$.
\end{thm}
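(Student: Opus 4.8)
The plan is to combine the pointwise Carleson-sparse domination from Theorem~\ref{thm:sparsedom} with a weighted $L_p$ testing/duality argument of the kind used for ordinary sparse forms, the only novelty being that the ``averages'' over $\widehat{3Q}$ are normalized by $|Q|$ rather than $|\widehat{3Q}|$, and that a factor $\nu^{-1}$ must be inserted so that the Carleson averaging happens with respect to $d\nu$. Concretely, by Theorem~\ref{thm:sparsedom} there is an $\eta$-sparse family $\mD_f$ with
$$
  |Sf(y)|\lesssim \sum_{Q\in\mD_f,\, Q\ni y} \frac 1{|Q|}\iint_{\widehat{3Q}} |f(t,x)|\,dt\,dx,\qquad \text{a.e. } y.
$$
Testing the $L_p(w)$ norm against $g\in L_q(w)$ with $\|g\|_{L_q(w)}=1$ (more precisely, writing $Sf\cdot h$ with $h=g\,w$ and $\|h\|_{L_q(\R^n)}=1$), one is reduced to bounding
$$
  \sum_{Q\in\mD_f}\Big(\frac 1{|Q|}\iint_{\widehat{3Q}}|f|\,dt\,dx\Big)\Big(\int_Q |h|\,dx\Big).
$$

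First I would rewrite the inner Carleson average so that it becomes a weighted Carleson average: insert $\nu^{-1}\nu$ inside the $x$-integral and replace $|Q|$ by $\nu(3Q)$, picking up the factor
$\nu(3Q)/|Q|$. Thus
$$
  \frac 1{|Q|}\iint_{\widehat{3Q}}|f|\,dt\,dx
  = \frac{\nu(3Q)}{|Q|}\cdot\frac 1{\nu(3Q)}\iint_{\widehat{3Q}}|f|\nu^{-1}\,dt\,d\nu
  \le \frac{\nu(3Q)}{|Q|}\,\inf_{3Q} C^{3D}_\nu(f\nu^{-1}),
$$
where $C^{3D}_\nu$ is the dilated dyadic weighted Carleson functional (normalized by $\nu(3Q)$); by the argument of Lemma~\ref{lem:dyadmax} this operator is bounded on $L_p(\nu)$ uniformly in $\nu$, just like $M^{3D}_\nu$. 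Similarly, $\int_Q|h|\,dx = \int_Q |h|w^{-1}\,dw \le w(Q)\inf_Q M^{D}_w(hw^{-1})$. Next I would absorb all the weight-dependent prefactors into the $A_p$ characteristic: the product
$$
  \frac{\nu(3Q)}{|Q|}\cdot w(Q)
  \le \frac{\nu(3Q)}{|3Q|}\cdot\frac{w(3Q)}{|3Q|}\cdot\frac{|3Q|^2}{|Q|\,|3Q|}\cdot\frac{w(Q)}{w(3Q)}
$$
and $\big(\tfrac{\nu(3Q)}{|3Q|}\big)\big(\tfrac{w(3Q)}{|3Q|}\big)^{?}$ needs to be split according to whether $p\ge 2$ or $p\le 2$: one uses $\langle w\rangle_{3Q}\langle\nu\rangle_{3Q}^{p/q}\le [w]_{A_p}$ and, on the other side, $\langle\nu\rangle_{3Q}\langle w\rangle_{3Q}^{q/p}\le[w]_{A_q}^{?}=[w]_{A_p}^{\max(1,q/p)}$-type bounds, together with the (non-quantitative, harmless) comparison $w(Q)\approx w(3Q)$ which costs only a dimensional constant since $3$ is fixed. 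This is exactly where the exponent $\max(1,q/p)$ enters, and getting the split right is the main bookkeeping step.

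With the prefactors controlled by $[w]_{A_p}^{\max(1,q/p)}$, what remains is the bound
$$
  \sum_{Q\in\mD_f} \big(\inf_{3Q} C^{3D}_\nu(f\nu^{-1})\big)\,\big(\inf_Q M^D_w(hw^{-1})\big)\,|E_Q|
  \lesssim \Big(\int_{\R^n}\big(C^{3D}_\nu(f\nu^{-1})\big)^p\,d\nu\Big)^{1/p}\Big(\int (M^D_w(hw^{-1}))^q dw\Big)^{1/q},
$$
which is the standard sparse-form estimate: replace the infima over $Q$ (resp.\ $3Q$) by the values on $E_Q\subset Q$, use that the $E_Q$ are pairwise disjoint so that $\sum_Q 1_{E_Q}\le 1$, and apply H\"older in the pair $(p,q)$ together with $d\nu^{1/p}\,dw^{1/q}$ being compatible since $\nu=w^{-q/p}$ makes $d\nu^{1/p}dw^{1/q}=dx$ pointwise-comparably. (One must be slightly careful that $3Q$ overlaps boundedly, which it does with dimensional multiplicity, so $\sum_Q 1_{E_Q}(z)$ weighted against the $C^{3D}_\nu$ values still telescopes against a fixed-overlap family; this is routine.) Finally, boundedness of $M^D_w$ on $L_q(w)$ (Lemma~\ref{lem:dyadmax}) and of $C^{3D}_\nu$ on $L_p(\nu)$ (same lemma, applied to $\nu$ and to the dilated averages) converts the right-hand side into $\|C^D_\nu(f\nu^{-1})\|_{L_p(\nu)}\cdot\|h\|_{L_q}=\|C^D_\nu(f\nu^{-1})\|_{L_p(\nu)}$, since $\|h\|_{L_q(\R^n)}=1$ and $C^{3D}_\nu\lesssim C^D_\nu$ after adjusting by a dimensional constant. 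Taking the supremum over such $h$ yields the claimed inequality.

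\textbf{Main obstacle.} I expect the real work to be the weighted bookkeeping in the middle paragraph: correctly distributing the quantity $\frac{\nu(3Q)w(Q)}{|Q|}$ into an $A_p$-bounded factor times a power of $[w]_{A_p}$, with the power coming out as $\max(1,q/p)$ and \emph{not} larger, while simultaneously keeping every passage from $Q$ to $3Q$ (and back) costing only an absolute dimensional constant so that sharpness in $[w]_{A_p}$ is preserved. The two regimes $1<p\le 2$ and $2\le p<\infty$ genuinely behave differently here and should be treated separately; everything else (the sparse form H\"older estimate and the maximal/Carleson $L_p$ bounds) is standard and supplied by Theorem~\ref{thm:sparsedom} and Lemma~\ref{lem:dyadmax}.
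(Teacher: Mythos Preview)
Your overall strategy---sparse domination from Theorem~\ref{thm:sparsedom}, then duality, then an $A_p$ bookkeeping step---is exactly what the paper does. But the bookkeeping step you flag as ``the main obstacle'' contains a genuine gap, not just bookkeeping.

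You propose to pull out a factor $[w]_{A_p}^{\max(1,q/p)}$ from $\frac{\nu(3Q)\,w(Q)}{|Q|}$ and be left with $|E_Q|$. This would require
\[
  \frac{\nu(3Q)\,w(Q)}{|Q|\,|E_Q|}\;\approx\;\langle\nu\rangle_{3Q}\langle w\rangle_Q
  \;\lesssim\; [w]_{A_p}^{\max(1,q/p)},
\]
and that inequality is simply false: the $A_p$ condition controls $\langle w\rangle_Q\langle\nu\rangle_Q^{p-1}$, not $\langle w\rangle_Q\langle\nu\rangle_Q$, and the latter is unbounded over all cubes (take $w=|x|^\alpha$ near the origin). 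So the prefactor you isolate cannot be absorbed into $[w]_{A_p}$ on its own, no matter how you split into the cases $p\gtrless 2$. Relatedly, the identity you rely on for the final H\"older step is the wrong way round: one has $w^{1/p}\nu^{1/q}=1$, not $\nu^{1/p}w^{1/q}=1$, so the passage from $\sum_Q(\cdots)|E_Q|$ to $(\int(\cdot)^p d\nu)^{1/p}(\int(\cdot)^q dw)^{1/q}$ does not follow.

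The paper's fix is to \emph{not} separate $|E_Q|$ from the prefactor. Instead one writes
\[
  B_Q=\frac{\nu(cQ)\,w(cQ)}{|Q|\,\nu(E_Q)^{1/p}\,w(E_Q)^{1/q}}
\]
and shows $\sup_Q B_Q\lesssim [w]_{A_p}^{\max(1,q/p)}$ by combining two constraints: the $A_p$ bound $\big(\tfrac{w(cQ)}{|cQ|}\big)^{1/p}\big(\tfrac{\nu(cQ)}{|cQ|}\big)^{1/q}\le[w]_{A_p}^{1/p}$ and H\"older on $E_Q$, namely $|Q|\lesssim|E_Q|\le w(E_Q)^{1/p}\nu(E_Q)^{1/q}$. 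Writing $a=w(cQ)/w(E_Q)$, $b=\nu(cQ)/\nu(E_Q)$, $c=[w]_{A_p}^{1/p}$, these give $B_Q\lesssim a^{1/q}b^{1/p}c$ together with the \emph{second} relation $a^{1/p}b^{1/q}\lesssim c$, and it is the interplay of the two that yields the sharp exponent. The weighted measures $\nu(E_Q)$, $w(E_Q)$ then feed directly into H\"older on the sparse sum, producing $\|C^{3D}_\nu(f\nu^{-1})\|_{L_p(\nu)}$ and $\|M^D_w(gw^{-1})\|_{L_q(w)}$ without any appeal to $\nu^{1/p}w^{1/q}$.

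Two smaller points. First, your assertion that $w(Q)\approx w(3Q)$ ``costs only a dimensional constant'' is false in the direction $w(3Q)\lesssim w(Q)$: the doubling constant of an $A_p$ weight depends on $[w]_{A_p}$, which would ruin sharpness. The paper avoids this entirely by using a single dilation constant $c$ throughout (ultimately $c=15$). Second, the pointwise bound $C^{3D}_\nu\lesssim C^D_\nu$ you invoke at the end is not true; passing from $C^{3D}_\nu$ to $C^D_\nu$ in $L_p(\nu)$ requires a separate pigeonhole argument showing $C^{3D}_\nu f\lesssim M^{3D}_\nu(C^D_\nu f)$ pointwise, after which Lemma~\ref{lem:dyadmax} applies.
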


\begin{proof}
(i)
We first estimate $Sf$ by the auxiliary Carleson functional
$$
  C^{3D}_\nu f(z)= \sup_{Q\in \mD, Q\ni z} \frac 1{\nu(cQ)}\iint_{\widehat{3Q}} |f(t,x)| dt d\nu, 
$$
where $c\ge 1$ is a fixed constant.
Let $g:\R^n\to\R$ be a dual function for which equality holds in 
H\"older's inequality
$\int_{\R^n} (Sf) gdx\le \|Sf\|_{L_p(w)}\|g\|_{L_q(\nu)}$.
Using Theorem~\ref{thm:sparsedom} and adapting the argument 
\cite[Sec. 5]{Lerner:16}, we estimate
\begin{multline*}
  \int_{\R^n} (Sf) gdx \lesssim \sum_{Q\in\mD_f}
  \left( \frac 1{|Q|}\iint_{\widehat{3Q}} |f|dtdx \right)
  \left( \int_Q |g| dx \right) \\
  =\sum_{Q\in\mD_f} B_Q
  \left( \frac {\nu(E_Q)^{1/p}}{\nu(cQ)}\iint_{\widehat{3Q}} |f|dtdx \right)
  \left( \frac{w(E_Q)^{1/q}}{w(cQ)}\int_Q |g| dx \right) \\
    \le \sup_{Q\in \mD} B_Q
  \left(\sum_{Q\in\mD_f} \Big(\frac {1}{\nu(cQ)}\iint_{\widehat{3Q}} |f|dtdx\Big)^p \nu(E_Q) \right)^{1/p} \\
  \left(\sum_{Q\in\mD_f} \Big(\frac 1{w(cQ)}\int_Q |g| dx\Big)^q  w(E_Q) \right)^{1/q} \\
      \le \big(\sup_{Q\in \mD} B_Q \big)
 \|C^{3D}_\nu(f\nu^{-1})\|_{L_p(\nu)}
  \|M^{D}_w(gw^{-1})\|_{L_q(w)}, 
\end{multline*} \Rd
using that $w(cQ)\ge w(Q)$ in the last inequality, \Bk
where $E_Q\subset Q$ are the disjoint ample subsets of $Q\in \mD_f$ and
\begin{multline*}
  B_Q= \frac{\nu(cQ)w(cQ)}{|Q|\nu(E_Q)^{1/p}w(E_Q)^{1/q}}
  \\= \left( \frac{w(cQ)}{w(E_Q)} \right)^{1/q}
  \left( \frac{\nu(cQ)}{\nu(E_Q)} \right)^{1/p}
  \left(\Big( \frac{w(cQ)}{|Q|}\Big)^{1/p} \Big(  \frac{\nu(cQ)}{|Q|} \Big)^{1/q} \right).
\end{multline*}
To estimate $B_Q$, write $a= w(cQ)/w(E_Q)$, $b=\nu(cQ)/\nu(E_Q)$
and $\gamma= [w]_{A_p}^{1/p}$, so that 
\begin{equation}  \label{eq:ab1}
B_Q\lesssim a^{1/q}b^{1/p}\gamma,
\end{equation}
since $|Q|\eqsim |cQ|$.
Using that $|Q|\lesssim |E_Q|\le w(E_Q)^{1/p}\nu(E_Q)^{1/q}$ by H\"older's inequality, we have
\begin{equation}  \label{eq:ab2}
  a^{1/p}b^{1/q}\lesssim \gamma.
\end{equation}
Combining \eqref{eq:ab1} and \eqref{eq:ab2} and noting $a,b\ge 1$, 
we have 
$B_Q\le (\gamma b^{-1/q})^{p/q} b^{1/p}\gamma\le \gamma^p$ if $p\ge 2$
and 
$B_Q\le a^{1/q}(\gamma a^{-1/p})^{q/p}\gamma\le \gamma^q$ if $p\le 2$.
This proves that $\sup_{Q\in \mD} B_Q\lesssim [w]_{A_p}^{\max(1,q/p)}$
and yields the stated estimate of $\|Sf\|_{L_p(w)}$, since 
$$
\|M^{D}_w(gw^{-1})\|_{L_q(w)}\lesssim \|gw^{-1}\|_{L_q(w)}
= \|g\|_{L_q(\nu)}.
$$

(ii)
Next we prove the stated estimate by $C^D_\nu$. By (i), it suffices to show that
\begin{equation}   \label{eq:3DvsD}
  \|C^{3D}_\nu f\|_{L_p(\nu)}\lesssim \|C^D_\nu f\|_{L_p(\nu)},
\end{equation}
for some $c\ge 1$ \Rd in the definition of $C^{3D}_\nu f$. \Bk
Assume that $C^{3D}_\nu f(z)>\lambda$.
Then there exists $Q\in\mD$ such that $Q\ni z$ and
$$
  \iint_{\widehat{3Q}} |f|dtd\nu>\lambda \nu(cQ).
$$
Let $P_j\in\mD$ denote the at most $2^n$ dyadic cubes of side length
$4\ell(Q)$ such that $P_j\cap (3Q)\ne\emptyset$, where
$P_1\supset Q$ is the grandparent of $Q$.
By the pigeonhole principle,
$$
\iint_{\widehat{P_j}} |f|dtd\nu \ge 2^{-n}\iint_{\widehat{3Q}} |f|dtd\nu
$$
holds for at least one of the dyadic Carleson regions $\widehat {P_j}$,
since $\bigcup_j \widehat{P_j}\supset \widehat{3Q}$.
We get
$$
\frac 1{\nu(P_j)}\iint_{\widehat{P_j}} |f|dtd\nu \ge 2^{-n}\lambda \nu(cQ)/\nu(P_j),
$$
and so $C^D_\nu f\ge 2^{-n}\lambda \nu(cQ)/\nu(P_j)$ on $P_j$.
Since
$z\in P_1\in\mD$ and $3P_1\supset P_j$, we have
$$
  M_\nu^{3D}(C^D_\nu f)(z)\ge 
  \frac 1{\nu(3P_1)}\int_{P_j} 2^{-n}\lambda \nu(cQ)/\nu(P_j) d\nu
  =\lambda 2^{-n} \nu(cQ)/\nu(3P_1).
$$
Choose $c= 15$. Then $cQ\supset 3P_1$ and therefore $M_\nu^{3D}(C^D_\nu f)(z)\ge \lambda 2^{-n}$.
Letting $\lambda\to C^{3D}_\nu f(z)$, we have shown that
$$
  C^{3D}_\nu f\le 2^n M_\nu^{3D}(C^{D}_\nu f).
$$
The estimate \eqref{eq:3DvsD} now follows from 
Lemma~\ref{lem:dyadmax},
which completes the proof.
\end{proof}

To obtain sharp weighted estimates of $N^D(S^*)$
via duality, where $N^D$ is the dyadic non-tangential maximal functional
$$
   N_Df(z)= \sup_{Q\in \mD, Q\ni z} \|f\|_{L_\infty(Q^w)},\qquad z\in \R^n,
$$
we need the following dyadic weighted duality estimates.

\begin{prop}   \label{prop:weightCarle}
Let $1<p<\infty$.
Then
\begin{equation}   \label{eq:nuCarlesonpair}
\left| \iint_{\R^{1+n}_+} fg dt d\nu \right|
\lesssim \|N^Df\|_{L_q(\nu)} \|C^D_\nu g\|_{L_p(\nu)}.
\end{equation}
Moreover, given any $f:\R^{1+n}_+\to\R$ with $\|N^Df\|_{L_q(\nu)}<\infty$,
there exists a non-vanishing $g:\R^{1+n}_+\to\R$ 
such that
\begin{equation}   \label{eq:revNdual}
   \|N^Df\|_{L_q(\nu)}\|C^D_\nu g\|_{L_p(\nu)} \lesssim  \iint_{\R^{1+n}_+} fg dt d\nu.
\end{equation}
The implicit constants in the estimates are independent of the weight $\nu$.
\end{prop}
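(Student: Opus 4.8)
The plan is to work through the partition $\R^{1+n}_+=\bigsqcup_{Q\in\mD}Q^w$ of the half-space into its disjoint Whitney regions. Write $b_Q:=\|f\|_{L_\infty(Q^w)}$ and $a_Q:=\iint_{Q^w}|g|\,dt\,d\nu$, so that $N^Df(z)=\sup_{Q\ni z}b_Q$, while $\iint_{\widehat Q}|g|\,dt\,d\nu=\sum_{R\subseteq Q}a_R$ and hence $C^D_\nu g(z)=\sup_{Q\ni z}\nu(Q)^{-1}\sum_{R\subseteq Q}a_R$. One may assume $\|N^Df\|_{L_q(\nu)}<\infty$, the other cases being trivial ($\|N^Df\|_{L_q(\nu)}=0$ forces $f=0$ a.e., and $\|C^D_\nu g\|_{L_p(\nu)}=0$ forces $g=0$ a.e.). Since $A_q$ weights have infinite total mass, so that $\nu(Q)\to\infty$ as $\ell(Q)\to\infty$, every union of dyadic cubes of finite $\nu$-measure — in particular each level set $U_\lambda:=\{N^Df>\lambda\}$, for which $\nu(U_\lambda)\le(\lambda^{-1}\|N^Df\|_{L_q(\nu)})^q<\infty$ — decomposes into its maximal dyadic subcubes; alternatively one first treats $f$ of bounded support, where this is automatic, and passes to the limit.

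For \eqref{eq:nuCarlesonpair} I would argue by a layer-cake decomposition. Since the Whitney regions are disjoint and $b_Q\le N^Df(z)$ for $z\in Q$,
\[
\Bigl|\iint_{\R^{1+n}_+}fg\,dt\,d\nu\Bigr|\le\sum_{Q\in\mD}b_Qa_Q=\int_0^\infty\Bigl(\,\sum_{Q:\,b_Q>\lambda}a_Q\Bigr)d\lambda .
\]
For fixed $\lambda$, $b_Q>\lambda$ forces $Q\subseteq U_\lambda$, hence $Q\subseteq Q_j^\lambda$ for one of the maximal dyadic cubes $U_\lambda=\bigsqcup_jQ_j^\lambda$, so $\bigsqcup_{Q:\,b_Q>\lambda}Q^w\subseteq\bigsqcup_j\widehat{Q_j^\lambda}$ and therefore
\[
\sum_{Q:\,b_Q>\lambda}a_Q\le\sum_j\iint_{\widehat{Q_j^\lambda}}|g|\,dt\,d\nu\le\sum_j\int_{Q_j^\lambda}C^D_\nu g\,d\nu=\int_{U_\lambda}C^D_\nu g\,d\nu,
\]
the middle step using $C^D_\nu g(z)\ge\nu(Q_j^\lambda)^{-1}\iint_{\widehat{Q_j^\lambda}}|g|\,dt\,d\nu$ for $z\in Q_j^\lambda$. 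Integrating in $\lambda$ and applying Fubini gives $\sum_Qb_Qa_Q\le\int_{\R^n}N^Df\cdot C^D_\nu g\,d\nu$, and Hölder's inequality yields \eqref{eq:nuCarlesonpair}, in fact with constant $1$.

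For \eqref{eq:revNdual} I would build $g$ explicitly, using the natural dual exponent $q-1$. For each $k\in\Z$ let $\tilde Q^{(k)}_i$ denote the maximal dyadic cubes with $\|f\|_{L_\infty((\tilde Q^{(k)}_i)^w)}>2^k$; they are pairwise disjoint and cover $\{N^Df>2^k\}$. Intersecting the dyadic level strips $\{2^k<N^Df\le2^{k+1}\}$ with these cubes produces a partition $\{N^Df>0\}=\bigsqcup_iF_i$ (mod null), where each $F_i$ lies in a dyadic cube $\tilde Q_i$ with $\|f\|_{L_\infty(\tilde Q_i^w)}>\tfrac12h_i$ and $\tfrac12h_i\le N^Df\le h_i$ on $F_i$, with $h_i:=\esssup_{F_i}N^Df$. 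On each $\tilde Q_i^w$ pick a set $G_i$ of positive $\nu$-measure and a sign $\sigma_i$ with $\sigma_if>\tfrac12\|f\|_{L_\infty(\tilde Q_i^w)}$ on $G_i$, and set
\[
g:=\sum_i\sigma_i\,\frac{\nu(F_i)\,h_i^{q-1}}{\iint_{G_i}dt\,d\nu}\,1_{G_i},
\]
with the convention that coefficients of indices sharing the same cube $\tilde Q_i$ are summed, so distinct terms occupy distinct Whitney regions. Then $fg\ge0$ on each $G_i$, whence $\iint_{\R^{1+n}_+}fg\,dt\,d\nu\gtrsim\sum_i\nu(F_i)h_i^q\ge\int_{\R^n}(N^Df)^q\,d\nu=\|N^Df\|_{L_q(\nu)}^q$. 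On the Carleson side, $G_i\subseteq\tilde Q_i^w\subseteq\widehat Q$ holds exactly when $\tilde Q_i\subseteq Q$, so $\iint_{\widehat Q}|g|\,dt\,d\nu=\sum_{i:\,\tilde Q_i\subseteq Q}\nu(F_i)h_i^{q-1}\lesssim\int_Q(N^Df)^{q-1}\,d\nu$, using $h_i^{q-1}\lesssim(N^Df)^{q-1}$ on $F_i$ and disjointness of the $F_i$; hence $C^D_\nu g\lesssim M^D_\nu\bigl((N^Df)^{q-1}\bigr)$ pointwise, and Lemma~\ref{lem:dyadmax} together with $(q-1)p=q$ gives $\|C^D_\nu g\|_{L_p(\nu)}\lesssim\|(N^Df)^{q-1}\|_{L_p(\nu)}=\|N^Df\|_{L_q(\nu)}^{q-1}$. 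Multiplying the two bounds gives $\iint fg\,dt\,d\nu\gtrsim\|N^Df\|_{L_q(\nu)}\|C^D_\nu g\|_{L_p(\nu)}$, and $g\not\equiv0$ unless $N^Df\equiv0$ (a trivial case).

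The layer-cake identity, Fubini, and Hölder's inequality are routine. The step I expect to be the main obstacle is the construction of $g$ for \eqref{eq:revNdual}: the maximal-cube selection at each dyadic height $2^k$ must be arranged so that the selected cubes are disjoint — so the sets $F_i$ genuinely partition $\{N^Df>0\}$ — while each still satisfies $\|f\|_{L_\infty(\tilde Q_i^w)}>2^k\gtrsim N^Df$ on $F_i$; one then has to handle the bookkeeping when several pieces $F_i$ are served by one Whitney region, and to verify the incidence relation that $G_i\subseteq\widehat Q$ precisely when $\tilde Q_i\subseteq Q$, which is exactly what makes the Carleson averages of $g$ telescope into a dyadic weighted maximal function of $(N^Df)^{q-1}$ to which Lemma~\ref{lem:dyadmax} applies.
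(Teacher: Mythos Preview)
Your proof is correct and follows essentially the same approach as the paper. For \eqref{eq:nuCarlesonpair} the paper simply refers to the unweighted result in \cite{HytRos:13}, so your layer-cake argument (yielding constant~$1$) is a welcome addition. For \eqref{eq:revNdual} both you and the paper construct $g$ so that $\iint fg\,dt\,d\nu\gtrsim\|N^Df\|_{L_q(\nu)}^q$ while $C^D_\nu g\lesssim M^D_\nu\bigl((N^Df)^{q-1}\bigr)$, then invoke Lemma~\ref{lem:dyadmax}; the only difference is that the paper uses a continuous layer-cake, setting the mass on each $Q^w$ equal to $g_Q:=\nu(Q)\int_{\{\lambda:\,Q\in\mD_\lambda\}}\lambda^{q-1}\,\tfrac{d\lambda}{\lambda}$, whereas you discretise to dyadic heights $2^k$ and assemble the corresponding pieces $F_i$ by hand. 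The continuous version avoids the bookkeeping you flag (several $F_i$ served by the same Whitney region), since the integral over $\lambda$ automatically aggregates the contributions, and it gives a clean identity $\tfrac{1}{\nu(Q)}\iint_{\widehat Q}|g|\,dt\,d\nu=\tfrac{1}{\nu(Q)}\int_Q(N^Df)^{q-1}\,d\nu$ rather than an inequality; otherwise the two arguments are interchangeable.
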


\begin{proof}
The result to be proved is a weighted extension of 
\cite[Thm. 2.2]{HytRos:13}, and the proof is a straightforward modification
of the unweighted proof.
Since we shall not use \eqref{eq:nuCarlesonpair}, we only give the 
proof of \eqref{eq:revNdual} to convince the reader that the constants there
are independent of $\nu$.

Given $f$ and $\lambda>0$, we let $\mD_\lambda\subset \mD$ 
be the set of maximal cubes
$Q\in\mD$ such that $\|f\|_{L_\infty(Q^w)}>\lambda$.
Then the cubes in $\mD_\lambda$ are disjoint and we have $\sett{z}{N^D f(z)>\lambda}=\bigcup \mD_\lambda$.
We compute
\begin{multline*}
  \|N^D f\|_{L_q(\nu)}^q=\int_0^\infty \lambda^q \,
  \nu(\{N^D f(z)>\lambda\})\frac{d\lambda}{\lambda} \\
  =\int_0^\infty \lambda^q \sum_{Q\in \mD_\lambda} \nu(Q) \frac{d\lambda}{\lambda} 
  =\sum_{Q\in\mD} \nu(Q) \int_{\lambda: \mD_\lambda\ni Q}
   \lambda^{q}\frac{d\lambda}{\lambda} \\
   \le \sum_{Q\in\mD} \|f\|_{L_\infty(Q^w)}\left(\nu(Q) \int_{\lambda: \mD_\lambda\ni Q}
   \lambda^{q-1}\frac{d\lambda}{\lambda}\right) 
   =   \sum_{Q\in\mD} \|f\|_{L_\infty(Q^w)} g_Q,
\end{multline*}
where we have set $g_Q= \nu(Q) \int_{\lambda: \mD_\lambda\ni Q}
   \lambda^{q-1}\frac{d\lambda}{\lambda}$.
For the inequality, we used that
$\lambda<\|f\|_{L_\infty(Q^w)}$ when $Q\in\mD_\lambda$.
Now define $g$ to be a function on $\R^{1+n}_+$,
whose restriction to $Q^w$ satisfies
$$
  \iint_{Q^w} fg dtd\nu\eqsim \|f\|_{L_\infty(Q^w)} \|g\|_{L_1(Q^w,dtd\nu)}=  \|f\|_{L_\infty(Q^w)}g_Q.
$$

To estimate $C_\nu^D g$, for $Q\in\mD$ we compute
\begin{multline*}
  \frac 1{\nu(Q)}\iint_{\widehat Q} |g| dtd\nu=
  \frac 1{\nu(Q)}\sum_{R\subset Q} \nu(R) 
  \int_{\lambda: \mD_\lambda\ni R} \lambda^{q-1}
  \frac{d\lambda}{\lambda} \\
   = \frac 1{\nu(Q)}\int_0^\infty \lambda^{q-1}
 \sum_{R\in\mD_\lambda, R\subset Q} \nu(R)  \frac{d\lambda}{\lambda} \\
  =\frac 1{\nu(Q)}\int_0^\infty \lambda^{q-1} \,
  \nu(\{N^Df>\lambda\}\cap Q) \frac{d\lambda}{\lambda} \\
  = \frac 1{\nu(Q)}\int_Q (N^D f)^{q-1} d\nu
  \le \inf_Q M^D_\nu((N^D f)^{q-1}).
\end{multline*}
Using Lemma~\ref{lem:dyadmax}, this shows that 
$$
    \|C^D_\nu g\|_{L_p(\nu)}\le \|M^D_\nu((N^D f)^{q-1})\|_{L_p(\nu)}
    \lesssim \|(N^D f)^{q-1}\|_{L_p(\nu)}= \|N^D f\|_{L_q(\nu)}^{q-1}.
$$
Therefore 
$$
   \|N^Df\|_{L_q(\nu)}\|C^D_\nu g\|_{L_p(\nu)} \lesssim
   \|N^Df\|_{L_q(\nu)}^q\le \sum_{Q\in\mD} \|f\|_{L_\infty(Q^w)} g_Q
   \eqsim \iint_{\R^{1+n}_+} fg dt d\nu,
$$
which completes the proof.
\end{proof}

With the above, we are now in position to prove sharp weighted non-tangential
maximal estimates for $\{\Theta_t\}_{t>0}$.

\begin{thm}   \label{thm:ntsharp}
Let $1<q<\infty$.
   Assume that the operator $S^*$ has estimates
   \eqref{eq:L2Nest} and that the kernel $k$ has estimates
   \eqref{eq:xreg} and \eqref{eq:yreg}.
Then for \Rd $f\in L_q(\R^n,\nu)$, \Bk we have estimates
$$
  \|N(S^*f)\|_{L_q(\nu)}\lesssim [\nu]_{A_q}^{\max(1,p/q)} \|f\|_{L_q(\nu)}, 
$$
for any fixed aperture $\alpha>0$ used in the definition \eqref{defn:Nfknal} of $N$.
Here the implicit constant in the estimate is independent of $\nu$.
\end{thm}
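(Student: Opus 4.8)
The plan is to carry out step (d): dualize the non-tangential maximal functional against the Carleson functional by means of Proposition~\ref{prop:weightCarle}, recognize the resulting pairing as $S$ tested against a dual function, and then invoke the weighted Carleson estimate of Theorem~\ref{thm:Sweightest} for $S$. This is available because the hypothesis \eqref{eq:L2Nest} for $S^*$ is equivalent, via \cite[Thm. 3.2]{HytRos:13}, to the hypothesis \eqref{eq:L2Cest} for $S$ needed in Theorem~\ref{thm:Sweightest}. Fix $1/p+1/q=1$ and, given the $A_q$ weight $\nu$, let $w=\nu^{-p/q}$ be the dual weight, so that $w\in A_p$ with $[w]_{A_p}=[\nu]_{A_q}^{p/q}$. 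A standard preliminary reduction -- comparing, with weight-independent constants, the aperture-$\alpha$ functional $N$ with finitely many dyadic functionals $N^{D}$, and truncating each $N^D$ to a finite sub-collection of cubes so that the quantity to be estimated is a priori finite and the dual function constructed below has bounded support -- lets us replace $N$ by a single (generic, truncated) dyadic $N^D$; the full estimate is then recovered by summation and monotone convergence, after extending from a dense class of $f$ for which $S^*f$ is defined.

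The heart of the argument is then the following chain. By the reverse duality \eqref{eq:revNdual} of Proposition~\ref{prop:weightCarle} applied to $S^*f$, there is a non-vanishing $g$ on $\R^{1+n}_+$ with $\|N^D(S^*f)\|_{L_q(\nu)}\,\|C^D_\nu g\|_{L_p(\nu)}\lesssim\iint_{\R^{1+n}_+}(S^*f)\,g\,dt\,d\nu$. Since $d\nu=\nu\,dx$ and $S$ is the adjoint of $S^*$, this pairing equals $\int_{\R^n}f\,S(g\nu)\,dx$, which by H\"older's inequality applied to $f\,S(g\nu)=(f\nu^{1/q})\,(S(g\nu)\nu^{-1/q})$, together with $\nu^{-p/q}=w$, is at most $\|f\|_{L_q(\nu)}\,\|S(g\nu)\|_{L_p(w)}$. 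Theorem~\ref{thm:Sweightest}, applied to $g\nu$ (which lies in $L_1(\R^{1+n}_+)$ with bounded support, by the truncation), bounds $\|S(g\nu)\|_{L_p(w)}$ by $[w]_{A_p}^{\max(1,q/p)}\,\|C^D_\nu((g\nu)\nu^{-1})\|_{L_p(\nu)}=[w]_{A_p}^{\max(1,q/p)}\,\|C^D_\nu g\|_{L_p(\nu)}$. Combining these and dividing out the factor $\|C^D_\nu g\|_{L_p(\nu)}$ -- finite by the truncation and strictly positive because $g$ is non-vanishing -- gives
$$
  \|N^D(S^*f)\|_{L_q(\nu)}\ \lesssim\ [w]_{A_p}^{\max(1,q/p)}\|f\|_{L_q(\nu)}\ =\ [\nu]_{A_q}^{\max(1,p/q)}\|f\|_{L_q(\nu)},
$$
the final equality being the elementary identity $(p/q)\max(1,q/p)=\max(1,p/q)$, valid when $1/p+1/q=1$ (it reduces to $(p-1)(q-1)=1$). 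Undoing the preliminary reductions yields the asserted bound for $N(S^*f)$.

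The computation itself is short; the real work lies in the reductions announced in the first paragraph. The most delicate point is passing from the continuous, aperture-$\alpha$ functional $N$ to the dyadic $N^D$ without spoiling the sharp exponent -- a crude change-of-aperture estimate would contribute an extra $[\nu]_{A_\infty}$- (or $[\nu]_{A_q}$-) dependent factor. The clean way is to retrace the proof of Proposition~\ref{prop:weightCarle} directly for the (possibly slightly enlarged) dyadic Whitney regions that capture the aperture-$\alpha$ cones, noting that that proof uses only $L_q(\nu)$ level-set identities together with the weight-independent dyadic maximal bound of Lemma~\ref{lem:dyadmax}, so that all constants stay independent of $\nu$. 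The remaining technical point -- that Proposition~\ref{prop:weightCarle}, including the construction of $g$ and the estimate $\|C^D_\nu g\|_{L_p(\nu)}\lesssim\|N^Df\|_{L_q(\nu)}^{q-1}$, survives the restriction to a finite collection of cubes, thereby securing the a priori finiteness needed to cancel $\|C^D_\nu g\|_{L_p(\nu)}$ -- is routine.
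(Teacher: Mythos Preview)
Your duality computation in the second paragraph is exactly the paper's step~(i): apply Proposition~\ref{prop:weightCarle} to $S^*f$, rewrite the pairing as $\int f\,S(g\nu)\,dx$, and feed it into Theorem~\ref{thm:Sweightest}. That part is correct and identical to the paper.

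Where you diverge is in the passage from $N$ to $N^D$. You propose to absorb the aperture into enlarged dyadic Whitney regions and re-run Proposition~\ref{prop:weightCarle}; you correctly flag that a naive change-of-aperture argument would cost an $A_\infty$ factor. The paper avoids this functional-analytic detour entirely: it exploits the kernel regularity \eqref{eq:xreg} one more time, writing
\[
  S^*f(t_z,x_z)=\big(S^*f(t_z,x_z)-S^*f(t_z,z)\big)+S^*f(t_z,z),
\]
so that the second term is controlled pointwise by the \emph{vertical} dyadic maximal function $N^D(S^*f)(z)$, while the first term is bounded by $Mf(z)$ via the standard Poisson-type estimate coming from \eqref{eq:xreg}. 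This yields the pointwise bound $N(S^*f)\lesssim N^D(S^*f)+Mf$, and Buckley's sharp estimate $\|Mf\|_{L_q(\nu)}\lesssim[\nu]_{A_q}^{p/q}\|f\|_{L_q(\nu)}$ finishes. The paper's route is shorter and makes explicit use of the hypothesis \eqref{eq:xreg} a second time (it is already used inside Theorem~\ref{thm:Sweightest}); your route is more ``soft'' and would in principle apply to operators without such smoothness, but as written it is only a sketch---in particular, once the Whitney regions overlap you must check that the constructed $g$ is still controlled in the \emph{original} $C^D_\nu$ norm, since that is what Theorem~\ref{thm:Sweightest} bounds.
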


\begin{proof}
(i)
The corresponding dyadic estimate, that is, with $N$ replaced
by $N^D$, follows immediately from
 Proposition~\ref{prop:weightCarle} and Theorem~\ref{thm:Sweightest} since
\begin{multline*}
  \|N^D(S^*f)\|_{L_q(\nu)} \lesssim 
   \iint_{\R^{1+n}_+} (S^*f)(t,x)g(t,x) dtd\nu \Big/\|C^D_\nu g\|_{L_p(\nu)} \\
  = \int_{\R^{n}} f(x) \,S(g\nu)(x) dx \Big/  \|C^D_\nu g\|_{L_p(\nu)}\\
\le\|f\|_{L_q(\nu)} \|S(g\nu)\|_{L_p(w)} \Big/ \|C^D_\nu g\|_{L_p(\nu)}\\ 
\lesssim \|f\|_{L_q(\nu)}[w]_{A_p}^{\max(1,q/p)}
= [\nu]_{A_q}^{\max(1,p/q)}\|f\|_{L_q(\nu)},
\end{multline*}
where $g$ is a function dual to $S^*f$, provided by Proposition~\ref{prop:weightCarle}.

(ii)
For the estimate of $N(S^*f)(z)$, $z\in \R^n$, let $(t_z, x_z)$ be such that
$|x_z-z|<\alpha t_z$ and $N(S^*f)(z)\eqsim |S^*f(t_z,x_z)|$.
Write 
$$
  S^*f(t_z,x_z)= (S^*f(t_z,x_z)-S^*f(t_z,z))+ S^*f(t_z,z)= I+II.
$$
For term II, we have $|S^*f(t_z,z)|\le N^Df(z)$.
For term I, we have
$$
  |S^*f(t_z,x_z)-S^*f(t_z,z)|\lesssim\int_{\R^n}
  \frac{|x_z-z|^\delta}{|(t_z,z)-(0,y)|^{n+\delta}} |f(y)|dy. 
$$
Using $|x_z-z|^\delta\lesssim t_z^\delta$ and 
writing 
$$
  \frac 1{(1+r)^{n+\delta}}
  \eqsim \int_r^\infty \frac{ds}{(1+s)^{n+\delta+1}},
$$
this gives 
\begin{multline*}
  |S^*f(t_z,x_z)-S^*f(t_z,z)|\lesssim
  \int_{\R^n} \frac{t_z^{-n}}{(1+|y-z|/t_z)^{n+\delta}}|f(y)| dy \\ \eqsim
  \int_0^\infty t_z^{-n}\left( \int_{|y-z|/t_z<s}|f(y)| dy \right) \frac{ds}{(1+s)^{n+\delta+1}}
  \lesssim Mf(z).
\end{multline*}
Therefore we have the pointwise estimate
$$
  N(S^*f)\lesssim  N^Df+ Mf
$$
The estimate in (i) above and the estimate $\|Mf\|_{L_q(\nu)}\lesssim [\nu]_{A_q}^{p/q} \|f\|_{L_q(\nu)}$ from \cite[Thm. 2.5]{Buckley:93} now completes the proof.
\end{proof}

We note that in the proof of Theorem~\ref{thm:ntsharp}, it would have sufficed
to estimate the vertical maximal function
$\|\sup_{t>0}|\Theta_t f|\|_{L_q(\nu)}$ in step (i).

\bibliographystyle{acm}

\end{document}